\newtheorem{theorem}{Theorem}
\newtheorem{prop}[theorem]{Proposition}
\newtheorem{cor}[theorem]{Corollary}
\newtheorem*{problem*}{Open Problem}
\crefname{theorem}{Theorem}{Theorems}
\crefname{lemma}{Lemma}{Lemmas}
\crefname{prop}{Proposition}{Propositions}
\crefname{cor}{Corollary}{Corollaries}
\crefname{section}{Section}{Sections}
\crefname{figure}{Figure}{Figures}
\crefname{table}{Table}{Tables}
\newcommand{\R}{{\mathbb R}}
\newcommand{\Z}{{\mathbb Z}}
\newcommand{\N}{{\mathbb N}}
\newcommand{\tM}{\widetilde{M}}
\newcommand{\LA}{L_A}
\newcommand{\LB}{L_B}
\newcommand{\df}{\textbf}
\DeclareMathOperator{\mex}{mex}
\title[Friendly Frogs]{Friendly frogs, stable marriage,\\ and the magic of invariance}
\author[Deijfen]{Maria Deijfen}
\address{Maria Deijfen, Stockholm University, Sweden} \email{mia@math.su.se}
\author[Holroyd]{Alexander E.\ Holroyd}
\address{Alexander E.\ Holroyd, Microsoft Research, Redmond, USA}
\email{holroyd@microsoft.com}
\author[Martin]{James B.\ Martin}
\address{James B.\ Martin, Department of Statistics,
University of Oxford, UK}
\email{martin@stats.ox.ac.uk}
\date{11 April 2016}
\keywords{Combinatorial game; random game; stable marriage; Poisson process.}
\subjclass[2010]{91A46; 60D05; 60G55}
\begin{document}
\maketitle

\begin{abstract}
We introduce a two-player game involving two tokens located at points of a
fixed set.  The players take turns to move a token to an unoccupied point in
such a way that the distance between the two tokens is decreased.  Optimal
strategies for this game and its variants are intimately tied to Gale-Shapley
stable marriage. We focus particularly on the case of random infinite sets,
where we use invariance, ergodicity, mass transport, and deletion-tolerance to determine game outcomes.
\end{abstract}

\section{Friendly Frogs}

Here is a simple two-player game, which we call
\df{friendly frogs}.  A pond contains several lily pads.
(Their locations form a finite set $L$ of points in
Euclidean space $\R^d$).  There are two frogs.  The first
player, \df{Alice}, chooses a lily pad and places a frog on
it. The second player, \df{Bob}, then places a second frog
on a distinct lily pad. The players then take turns to
move, starting with Alice. A move consists of jumping
either frog to another lily pad, in such a way that the
distance between the two frogs is strictly decreased, but
they are not allowed to occupy the same lily pad. (The
frogs are friends, so do not like to be moved further
apart, but a lily pad is not large enough to support them
both.) A player who cannot move \df{loses} the game (and
the other player \df{wins}). See \cref{example} for an
example game.
\begin{figure}
\centering
\includegraphics[width=.88\textwidth]{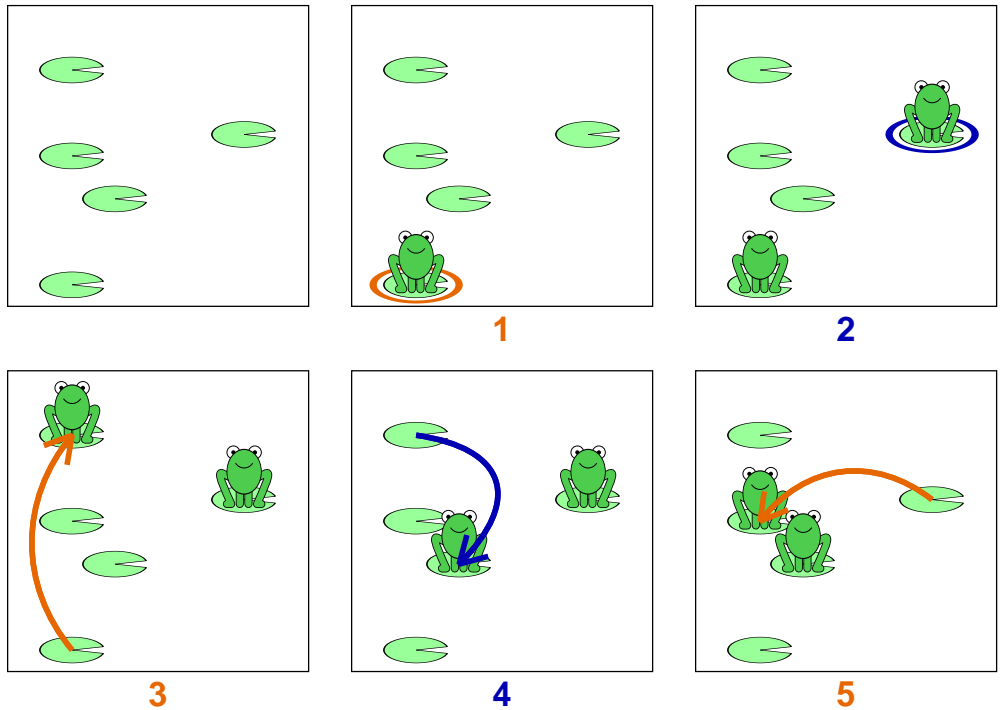}
\caption{A game of friendly frogs on a set $L$ of size $5$.
Alice starts.  Alice's moves are shown in amber, Bob's in blue.
After move $5$, Bob has no legal move, so Alice wins.}\label{example}
\end{figure}

We are interested in optimal play.  A \df{strategy} for a player is a map
that assigns a legal move (if one exists) to each position, and a
\df{winning} strategy is one that results in a win for that player whatever
strategy the other player uses.  (In friendly frogs, a position consists of
the locations of $0$, $1$ or $2$  frogs.) If there exists a winning strategy
for a player, we say that the game is a \df{win} for that player (and a
\df{loss} for the other player).

Since there are only finitely many possible positions, and the distance
between the frogs decreases on each move, the game must end after a finite number of moves.
Consequently, for any set $L$, the game is a win for exactly one player. Is it Alice or Bob?  Surprisingly,
the answer depends only on the size of $L$.

\begin{theorem}\label{theorem:finite}
Consider friendly frogs played on a finite set $L\subset\R^d$ of size $n$ in
which all pairs of points have distinct distances.  The game is a win for
Alice if $n$ is odd, and a win for Bob is $n$ is even.
\end{theorem}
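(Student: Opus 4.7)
The plan is to identify a canonical matching $M$ on $L$ whose edges are exactly the $P$-positions of the post-placement game (positions from which the player to move loses under optimal play). The natural choice is the \emph{iterated mutually-nearest matching}: repeatedly select the pair $\{x,y\}$ of remaining points with smallest pairwise distance, add it to $M$, and delete $x,y$. Distinct distances make this well-defined; $M$ is a perfect matching when $n$ is even, and leaves exactly one point $u$ unmatched when $n$ is odd. The key structural property is that each edge $e \in M$ has minimum distance among all pairs of points disjoint from the edges of $M$ that precede it, so distances increase along the iteration.

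First I would show how the theorem follows from the key lemma that $\{x,y\}$ is a $P$-position if and only if $\{x,y\} \in M$. If $n$ is odd, Alice places at the unmatched point $u$: for any $b \neq u$ chosen by Bob, the pair $\{u,b\}$ does not lie in $M$ (since $u$ is unmatched), so it is an $N$-position and Alice wins from it. If $n$ is even, whichever $a$ Alice plays, Bob replies with its partner $M(a)$, leaving Alice to move from the $P$-position $\{a, M(a)\}$.

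Next I would prove the key lemma by strong induction on $d(x,y)$. The base case is the globally minimum-distance pair, which lies in $M$ and admits no legal move, hence is $P$. For the inductive step: if $\{x,y\} \in M$, then every successor $\{x,w\}$ (with $w \neq y$ and $d(x,w) < d(x,y)$) satisfies $\{x,w\} \notin M$, because $x$ is already matched to $y$; by induction each such successor is $N$, so $\{x,y\}$ is $P$. Conversely, if $\{x,y\} \notin M$, at most one of $x,y$ is unmatched; assuming both are matched, WLOG $\{x, M(x)\}$ enters $M$ no later than $\{y, M(y)\}$, so all three of $x, M(x), y$ were still available when $\{x, M(x)\}$ was chosen as the minimum, giving $d(x,M(x)) < d(x,y)$. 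Moving the frog at $y$ to $M(x)$ therefore reaches $\{x, M(x)\} \in M$, a $P$-position by induction, so $\{x,y\}$ is $N$. If instead (say) $x$ is the unique unmatched vertex, the same argument works with $\{y, M(y)\}$ in place of $\{x, M(x)\}$, since $x$ is available at every step.

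The main obstacle is not the induction itself but recognising the right invariant — namely, that optimal play in this distance-decreasing game is governed by the iterated mutually-nearest matching, which is precisely the (unique) stable matching under distance-based preferences. Once that is guessed, the distinct-distance hypothesis is exactly what is needed to make $M$ canonical and to supply the strict inequalities on which the induction relies.
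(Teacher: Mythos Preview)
Your proof is correct and follows essentially the same route as the paper: the matching you build by iteratively removing the closest remaining pair is exactly the matching the paper obtains by scanning all pairs in increasing distance order, and your inductive verification that $M$ coincides with the set of P-positions is just a slightly more explicit version of the paper's direct check of conditions (N) and (P). The concluding parity argument for Alice's and Bob's opening moves is identical.
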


\begin{proof}
Let $M$ be the set of all unordered pairs $\{x,y\}$ in $L$ such that the game
started with two frogs at $x$ and $y$ is a loss for the next player. The key
ingredient is a simple algorithm that identifies $M$.  (We postpone
consideration of the two opening moves, in which the frogs are placed). In
fact $M$ will form a partial matching on $L$. We construct this matching
iteratively as follows. The idea is to work backwards from positions where
the outcome is known. Order the set of all $\binom{n}{2}$ pairs in $L$ in
increasing order of distance between the pair.  Then for each pair in turn,
match the two points to each other if and only if neither is already matched.
The algorithm ends with at most one point not matched.  See
\cref{matching-example} for an example.
\begin{figure}
\centering
\includegraphics[width=.29\textwidth]{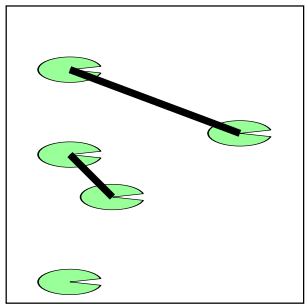}
\caption{The matching $M$ of the set $L$ in \cref{example}.}\label{matching-example}
\end{figure}

To show that this $M$ has the claimed property for the game, we need to check
that from any position in $M$, it is impossible to move to another position
in $M$, while from a position not in $M$, it is possible to move to a
position in $M$.  The former is immediate because $M$ is a partial matching
(and a move consists of moving only one frog).  For the latter, suppose the
frogs are located at $x$ and $y$, and that $x$ and $y$ are not matched to
each other.  Since $x$ and $y$ were not matched by the algorithm, at least
one of them was matched to a closer point;  without loss of generality, $x$
is matched to $w$, where $|x-w|<|x-y|$. (Here and subsequently, $|\cdot|$ is
the Euclidean norm on $\R^d$.) Therefore we can move a frog from $y$ to $w$.

If $n$ is odd then there is exactly one point that is not matched, so Alice
wins by placing the first frog there; wherever Bob places the second frog,
the two frog locations are not matched to each other.  If $n$ is even then
the matching $M$ is perfect (i.e.\ every point is matched). Therefore,
wherever Alice places the first frog, Bob wins by placing the second on its
partner in $M$.
\end{proof}

We will consider various extensions of the friendly frogs game, including
versions where frogs and/or points are player-specific (available only to one
player), where certain moves are forbidden, and where different winning
criteria apply.  Notwithstanding the humble beginning of
\cref{theorem:finite}, we will be led into some very intriguing waters. For
concreteness we will focus throughout on points in $\R^d$, although many
arguments carry over to more general metric spaces (and, for instance, the
above proof extends even to any injective symmetric distance function on
$L$).  We will continue to assume that all inter-point distances are
distinct.  (Relaxing this assumption is also quite natural, but we choose
instead to pursue other directions.)

Matters become particularly interesting when we allow the set of points (lily
pads) $L$ to be \emph{infinite}, and especially a \emph{random} countable
set.  The ``losing'' two-frog positions will still form a matching, and this
matching is most naturally interpreted as a version of the celebrated
\emph{stable marriage} of Gale and Shapley, the topic of the 2012 Nobel prize
in economics (awarded to Shapley and Roth).  We will make crucial use of
\emph{invariance} of the probability distribution of $L$ under symmetries of
$\R^d$.  This powerful tool permits remarkably simple and elegant proofs of
facts apparently not amenable to other arguments. In games involving points
of several types, we will see an example of a \emph{phase transition}, as
well as a situation in which existence of a phase transition is an open
question.  We will also analyze play of simultaneous games by making a
connection to the remarkable theory of Sprague-Grundy values (or
``nimbers'').

\enlargethispage*{1cm} The article contains a mixture of original research
and expository material. We use the friendly frogs game partly as a vehicle
to showcase some beautiful known ideas, and we assume a minimum of technical
background.  The game and its analysis are novel, so far as we know. Stable
marriage \cite{Gale} and its variants have been extensively studied, but the
connection to games appears to be new. Many of the results that we use on
matchings of random point sets are taken from \cite{Pom}.  We will review the
necessary background and give proofs where appropriate.  The general theory
of combinatorial games is highly developed (see e.g.\ \cite{winning-ways}).
We will explain the relevant parts of the theory as they apply in our
context.  Other recent work on games in random settings appears for example
in \cite{hm-tree,bhmw,hmm} and the review \cite{krivelevich}.  In a different
direction, certain games in infinite spaces have intimate connections with
general topology \cite{telgarsky}.

%
%

\section{Infinite point sets}

\cref{theorem:finite} shows that the outcome of friendly frogs on a finite
set $L$ is determined solely by the parity of the number of points (lily
pads). What happens when $L$ is infinite?  Is $\infty$ odd or even?  The
answer now depends on the choice of set; we will focus especially on the
behaviour of \emph{typical} (i.e.\ random) infinite sets.

Let $L$ be an infinite subset of $\R^d$. As before, we assume that all
distances between pairs of points in $L$ are distinct.  We call a sequence of
points $x_1,x_2,\ldots$ a \df{descending chain} if the distances
$(|x_i-x_{i+1}|)_{i\geq 1}$ form a strictly decreasing sequence. If there
exists an infinite descending chain $x_1,x_2,\dots\in L$, then it is possible
for the game to last forever. See \cref{inf-ex}. Therefore we make the additional assumption
that $L$ has no infinite descending chains. This implies in particular that
$L$ is discrete, i.e.\ any bounded set contains only finitely many points.
\begin{figure}
\centering
\includegraphics[width=.5\textwidth]{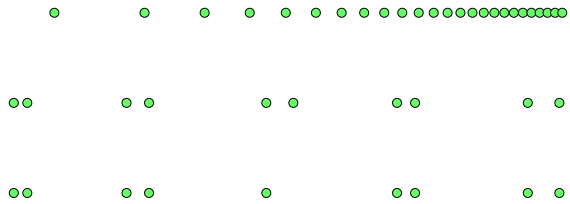}
\caption{Examples of infinite sets $L\subset \R$.
Top: play continues forever, owing to an infinite descending chain.
 Middle: Bob wins.  Bottom: Alice wins.}\label{inf-ex}
\end{figure}

It is easy to construct examples of infinite $L$ satisfying the above
conditions for which either player wins friendly frogs; see \cref{inf-ex}.
Firstly, in dimension $1$,  place exactly \textit{two} points in each of the
intervals $[3i,3i+1]$ for $i\in\Z$.  (A simple way to make all inter-point
distances distinct is to choose each point uniformly at random in the
appropriate interval, independently of all others.)  Then Bob wins by placing
a frog at the unique point in the same interval as Alice's initial frog.
Secondly, suppose the points are as above except that the interval $[0,1]$
now contains only one point. Then Alice wins by placing the first frog on
this point; whichever point Bob chooses for the second frog, Alice can then
move the first frog to the ``partner" of that point in the appropriate unit
interval.

As in the previous section, the key to analyzing the game for general $L$ is
to identify those positions from which the game is a loss for the player
whose turn it is to move.  Following standard conventions of combinatorial
game theory (see e.g.~\cite{winning-ways}), such positions are called
\df{P-positions} to indicate that the [P]revious player wins, while all other
positions are called \df{N-positions}, since the [N]ext player wins.  Since
terminal positions are P-positions, the P- and N-positions satisfy the
following.
\begin{itemize}
\item[(N)] From every N-position, there is at least one
    possible move to a P-position.
\item[(P)] From every P-position, every possible move is to an
    N-position.
\end{itemize}

Since the game terminates in a finite number of moves, it follows by
induction that these properties are sufficient to characterize the P- and
N-positions. That is, to check that a claimed partition of the positions into
P- and N-positions is correct, it suffices to check that it satisfies (N) and
(P).

In many games, characterizing the set of P-positions is a
difficult problem requiring experimentation and insight. In
contrast, checking via (N) and (P) that such a
characterization is correct may be essentially mechanical.

In friendly frogs, the two-frog P-positions are given by a matching. Here is
some notation.  Let $L\subseteq \R^d$. A \df{matching} of $L$ is a set $M$ of
unordered pairs of distinct points in $L$ such that each point of $L$ is
included in at most one pair. The matching is \df{perfect} if each point is
included in exactly one pair. For $x\in L$, we write $M(x)$ for the
\df{partner} of $x$, i.e.\ the unique point $y$ such that $\{x,y\}\in M$,
 or, if there is no such $y$, we set $M(x):=\infty$ and say
that $x$ is \df{unmatched}.

As in the case of finite $L$ in the last section, we will construct the
relevant matching iteratively. Now, however, there may be no closest pair of
points, so we need a local version of the algorithm.

The following abstraction will prove very useful. Imagine that each point of
$L$ ``prefers" to be matched to a partner that is as close as possible.
Given a matching $M$ of $L$, a pair of points $x,y\in L$ is called
\df{unstable} if they both strictly prefer each other over their own
partners, i.e.\ if $|x-M(x)|$ and $|y-M(y)|$ both strictly greater than
$|x-y|$ (where $|x-M(x)|:=\infty$ if $M(x)=\infty$, so that any partner is
preferable to being unmatched).  A matching $M$ is called \df{stable} if
there are no unstable pairs. Note that any stable matching of $L$ has at most
one unmatched point.

Stable matching can be applied to a wide variety of settings involving agents
each of which has preferences over the others.  The concept was introduced in
a celebrated paper of Gale and Shapley \cite{Gale}, who considered the
setting of $n$ heterosexual marriages between $n$ girls and $n$ boys, each of
whom has an arbitrary preference order over those of the opposite sex.  Gale
and Shapley gave a beautiful algorithm proving the existence of a stable
matching in this case. (They showed however that stable matchings are not
necessarily unique, and may not exist in the same-sex ``room-mates''
variant). As mentioned earlier, the 2012 Nobel prize in economics was awarded
on the basis of this and ensuing work, to Lloyd S.~Shapley for theoretical
advances, and to Alvin E.~Roth for practical applications.  Our setting
differs from the standard Gale-Shapley same-sex matching problem in that the
set $L$ is infinite; on the other hand, our preferences are very special,
since they are based on distance.  This case was studied in \cite{Pom}.

\begin{prop}[\cite{Pom}]\label{prop:stable_matching}
Suppose $L\subset \R^d$ has all pairwise distances distinct and has no
infinite descending chains.  Then there exists a unique stable matching of
$L$.
\end{prop}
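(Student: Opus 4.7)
The plan is to adapt the greedy matching from the proof of Theorem~1 to the infinite setting using the local notion of \emph{mutual nearest neighbors} (MNN). Call a pair $\{x,y\}\subseteq L$ an MNN pair if $y$ is the closest point to $x$ in $L\setminus\{x\}$ and $x$ is the closest point to $y$ in $L\setminus\{y\}$. The hypotheses on $L$ force $L$ to be discrete, so every point has a unique nearest neighbor and MNN pairs are well-defined.

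For existence, I would define iteratively $L_0:=L$, and at stage $k$ take $P_k$ to be the collection of MNN pairs within $L_k$, then set $L_{k+1}:=L_k\setminus\bigcup P_k$; the output is $M:=\bigcup_k P_k$. Two things need verification. First, $P_k$ is nonempty whenever $|L_k|\geq 2$: pick any $x_0\in L_k$, let $x_1$ be its nearest neighbor in $L_k$, let $x_2$ be the nearest of $x_1$ if distinct from $x_0$, and so on; consecutive distances strictly decrease, so this sequence must terminate, necessarily at an MNN pair. Second, $M$ is stable: if $\{x,y\}$ were unstable, then at the first stage $k$ at which (say) $x$ is matched, both $y$ and $M(x)$ lie in $L_k$ with $|x-y|<|x-M(x)|$, contradicting that $M(x)$ is the nearest neighbor of $x$ in $L_k$.

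For uniqueness I would run a direct chain-swap argument that again exploits the no-infinite-descending-chain hypothesis. Suppose $M\neq M'$ are both stable and let $x_1$ be a point on which they disagree; without loss of generality assume $|x_1-M(x_1)|<|x_1-M'(x_1)|$ (interpreting an unmatched partner as distance $\infty$). Set $y_1:=M(x_1)$; stability of $M'$ applied to $\{x_1,y_1\}$ forces $|y_1-M'(y_1)|<|y_1-x_1|$, so set $x_2:=M'(y_1)$; then stability of $M$ applied to $\{y_1,x_2\}$ forces $|x_2-M(x_2)|<|x_2-y_1|$, so set $y_2:=M(x_2)$. Iterating produces $x_1,y_1,x_2,y_2,\ldots$ whose consecutive distances strictly decrease: an infinite descending chain, contradicting the hypothesis.

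The main obstacle lies on the existence side: it is not obvious a priori that the iteration exhausts $L$ (up to at most one leftover point) within countably many stages. I would address this with a well-founded-rank argument, assigning each $x\in L$ a rank equal to the supremum length of descending chains anchored at $x$ (finite by hypothesis) and checking that this rank strictly decreases for any still-unmatched point at each stage; alternatively, one defers to the explicit construction in \cite{Pom}, where the termination is worked out.
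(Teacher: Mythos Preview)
Your construction via iterated mutual-nearest-neighbor removal is exactly the paper's, and your stability check is essentially identical to theirs (modulo the ordering issue noted below). Your uniqueness argument, however, is genuinely different: the paper argues by induction over the stages that every pair matched by the algorithm is forced in \emph{any} stable matching, and then combines this with ``at most one point unmatched'' to conclude uniqueness. Your direct alternating-chain argument between two hypothetical stable matchings $M\neq M'$ is a clean alternative that does not reference the algorithm at all; it works, and it has the virtue of isolating uniqueness from the construction.

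The real gap is in your ``exhaustion'' step. The rank you propose --- the supremum length of descending chains anchored at $x$ --- need \emph{not} be finite under the hypothesis. ``No infinite descending chain'' does not bound the lengths of chains from a fixed point, because the tree of such chains is not finitely branching at the root (the first step can go arbitrarily far), so K\"onig's lemma does not apply. For a concrete picture in $\R$: place $0$, and for each $n\ge 2$ a finite cluster of $n$ points near $10^n$ with successive gaps $1/2,1/4,\dots,1/2^{n-1}$; then $0$ starts descending chains of every finite length, yet there is no infinite one. So your claim ``finite by hypothesis'' fails, and with it the proposed induction on rank. Note also that your stability paragraph silently assumes at least one of $x,y$ is eventually matched; two never-matched points would themselves be an unstable pair, so stability cannot be established before the exhaustion claim.

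The paper's fix is short and worth internalizing: if two points remain unmatched, the no-descending-chain hypothesis yields a mutually closest pair $\{x,y\}$ \emph{within the unmatched set}; discreteness then guarantees only finitely many points of $L$ lie within distance $|x-y|$ of $x$ or of $y$, and since all of those (other than $x,y$) are eventually matched, at some finite stage $x$ and $y$ become mutually closest in $L_k$ --- a contradiction. This replaces your rank induction entirely.
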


\begin{proof}
We will show that the following algorithm leads to a stable matching. First
match all mutually closest pairs of points. Then remove them and match all
mutually closest pairs in the remaining point set.  Repeat indefinitely
(i.e.\ for a countably infinite sequence of stages), and take as the final
matching the set of all pairs that are ever matched.

By induction over the stages in the algorithm, every pair that is matched by the algorithm must be
matched in any stable matching.

Furthermore, at most one point can be left unmatched by the algorithm. To see
this, assume that there are at least two unmatched points. Since there are no
descending chains, the set of unmatched points then contains at least one
pair of points that are mutually closest in this set and, since $L$ is
discrete, this pair must have been mutually closest at some finite stage of
the algorithm. However, then they should have been matched to each other,
which is a contradiction.

Finally, we need to confirm that the resulting matching is
in fact stable.  To this end, assume that there exist
$x,y\in L$ with $|x-M(x)|$ and $|y-M(y)|$ both strictly
greater than $|x-y|$. By the previous argument, at least
one of $x$ and $y$ is matched, so consider the earliest
stage at which one of them was matched by the algorithm.
Since both $x$ and $y$ were unmatched prior to this stage,
we obtain a contradiction.
\end{proof}
\begin{figure}
\centering
\includegraphics[width=.65\textwidth]{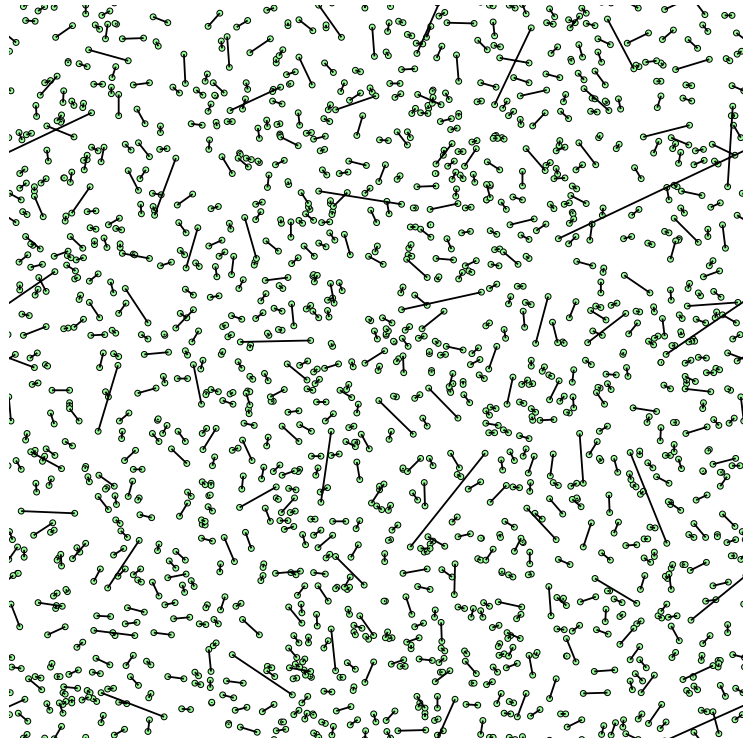}
\caption{The stable matching of random points on a two-dimensional torus.}\label{torus}
\end{figure}

\begin{samepage}
\begin{prop}\label{stable-game}
Suppose $L\subset \R^d$ has all pairwise distances distinct and has no
infinite descending chains.  Let $M$ be the stable matching of $L$ and
consider friendly frogs on $L$.  The position with the two frogs at $x$ and
$y$ is a P-position if and only if $x$ is matched to $y$ in $M$.
\end{prop}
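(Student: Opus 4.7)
The plan is to verify that the claimed partition of two-frog positions --- with $\{x,y\}$ a P-position when $x$ is matched to $y$ in $M$ and an N-position otherwise --- satisfies the conditions (P) and (N). Since $L$ has no infinite descending chains and each move strictly decreases the inter-frog distance, every play of friendly frogs terminates in finitely many moves, so by the discussion immediately preceding the statement, (P) and (N) together characterize the P- and N-positions via well-founded induction on positions.

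To check (P), suppose $\{x,y\}\in M$. A legal move yields a position of the form $\{x,z\}$ (or symmetrically $\{w,y\}$) with $z\neq y$ and $|x-z|<|x-y|$. Since $M$ is a matching, the unique partner of $x$ in $M$ is $y$, so $\{x,z\}\notin M$; the new position is thus an N-position.

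To check (N), suppose $\{x,y\}\notin M$. Because $M$ is stable, $\{x,y\}$ is not an unstable pair; using the convention $|u-M(u)|=\infty$ when $u$ is unmatched, this means that at least one of $|x-M(x)|<|x-y|$ or $|y-M(y)|<|x-y|$ holds (strict inequality because all pairwise distances in $L$ are distinct, and $M(x)=y$ would place $\{x,y\}$ in $M$). Without loss of generality assume $|x-M(x)|<|x-y|$; in particular $x$ is matched, so $w:=M(x)$ is a point of $L$, and $w\neq y$. Moving the frog at $y$ to $w$ is then a legal move, and the resulting position $\{x,w\}$ lies in $M$ and is therefore a P-position.

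This is essentially a transcription of the finite-case argument of \cref{theorem:finite}, with the stable matching furnished by \cref{prop:stable_matching} replacing the greedy matching used there. The only point where the infinite setting plays a role is the appeal to the no-infinite-descending-chain hypothesis to guarantee termination of play and hence the validity of the inductive (P)/(N) characterization; since this has already been secured, I do not foresee a real obstacle in writing out the details.
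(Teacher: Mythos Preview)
Your proof is correct and follows essentially the same approach as the paper: verify termination from the no-descending-chains hypothesis, then check (P) using that $M$ is a matching and (N) using stability of $M$ to find a closer partner to move to. The only difference is cosmetic---you spell out a few details (e.g.\ the strict inequality from distinctness of distances) that the paper leaves implicit.
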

\end{samepage}

\begin{proof}
Since $L$ has no infinite descending chains, the game terminates.  Therefore,
it suffices to check the conditions (N) and (P) above.  For (N), if
$\{x,y\}\not\in M$, then $x$ (or $y$) must have a partner that is closer than
$y$ (or respectively $x$), since otherwise $x$ and $y$ would constitute an
unstable pair. Without loss of generality, $M(x)=w$ where $|x-w|<|x-y|$, and
we can then move a frog from $y$ to $w$. The claim (P) is immediate, since
$M$ is a matching.
\end{proof}

As before, if $M$ has one unmatched point then Alice wins by placing the
first frog at that point. If the matching is perfect then Bob wins by placing
the second frog at the partner of Alice's initial move.  As we have seen,
both situations are possible for suitable infinite sets $L$.

\subsection{Random infinite sets}

It is natural to ask what happens for a \textit{typical} infinite set of
points.  A natural and canonical way to formalize this notion is the Poisson
point process, which is defined as follows.  Fix $\lambda>0$. Let any Borel
set of finite volume contain a random number of points with a Poisson
distribution of mean equal to $\lambda$ times its volume, and let disjoint
sets contain independent numbers of points.  These conditions characterize
the distribution of the set of points, and the resulting random set is called
a (homogeneous) \df{Poisson (point) process} with \df{intensity} $\lambda$ on
$\R^d$. It is a countable infinite set with probability $1$.  (The Poisson
process has other equivalent definitions -- for instance it may be
constructed as a limit as $n\to\infty$ of $n$ uniformly random points in a
ball of volume $n/\lambda$ around the origin, or as a limit as $\epsilon\to
0$ of a grid of cubes of volume $\epsilon$ each of which contains a point
with probability $\epsilon\lambda$ independently.)  If $L$ is a Poisson
process of intensity $1$ then $\{\lambda^{1/d} x:x\in L\}$ is a Poisson
process of intensity $\lambda$ -- the intensity parameter will be unimportant
for us until we consider several Poisson processes together.  See e.g.\
\cite{Daley_VJ} for background. It is straightforward to check that with
probability $1$, all pairs of points have distinct distances, and that there
are no descending chains. See e.g.\ \cite{Olle_Ronald} or \cite{daley-last}
for proofs. The process is \textbf{translation-invariant}, which is to say,
its distribution is invariant under the action of any translation of
$\mathbb{R}^d$.


\begin{theorem}\label{inf}
Let $L$ be a Poisson point process on $\R^d$.  With
probability $1$, friendly frogs on $L$ is a win for Bob.
\end{theorem}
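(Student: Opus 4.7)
The plan is to combine \cref{stable-game} with the translation invariance of the Poisson point process to show that the stable matching $M$ of $L$ is almost surely perfect, whence Bob wins by placing the second frog at $M(x)$, where $x$ is Alice's opening move.

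By standard properties of the Poisson process recalled just before the theorem, $L$ almost surely has distinct pairwise distances and no infinite descending chains, so \cref{prop:stable_matching} applies and produces a unique stable matching $M$, which leaves at most one point of $L$ unmatched. Let $U\subseteq L$ be the (random) set of unmatched points; by \cref{stable-game} together with the subsequent remark, Bob wins if and only if $U=\emptyset$, so it suffices to show $U=\emptyset$ almost surely. The algorithm in \cref{prop:stable_matching} depends only on inter-point distances, so $M$ is a translation-equivariant function of $L$; hence $U$ is translation-equivariant too, and since the law of $L$ is translation-invariant, so is the law of $U$.

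Now define a Borel measure on $\R^d$ by $\mu(A):=\E\,|U\cap A|$. Translation invariance of the law of $U$ makes $\mu$ a translation-invariant Borel measure on $\R^d$, hence a constant multiple of Lebesgue measure. On the other hand the bound $|U|\leq 1$ gives $\mu(\R^d)\leq 1$, whereas the Lebesgue measure of $\R^d$ is infinite; the constant must therefore vanish, so $\E\,|U|=0$ and $U=\emptyset$ almost surely. The proof is then complete.

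The only technical point requiring care is to check that $M$, and hence $U$, really is a measurable function of $L$, so that the phrase ``law of $U$ is translation-invariant'' is meaningful. This is routine given that the iterative algorithm in \cref{prop:stable_matching} matches only mutually closest pairs at each stage, an operation that is manifestly measurable on discrete point configurations; I do not expect any genuine obstacle beyond this bookkeeping once the invariance viewpoint is adopted.
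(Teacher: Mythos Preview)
Your proof is correct and follows essentially the same approach as the paper: both show that the unique stable matching is almost surely perfect by arguing that a translation-invariant random subset of $\R^d$ with at most one point must be empty. The only cosmetic difference is that you phrase this via the intensity measure $\mu(A)=\E\,|U\cap A|$ and the Haar-measure characterization, whereas the paper partitions $\R^d$ into unit cubes and sums the (equal) probabilities that each contains the unmatched point.
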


\begin{proof}
By Proposition \ref{prop:stable_matching}, there is a unique stable matching
$M$ of $L$. It suffices to check that this matching is perfect with
probability 1. The matching has at most one unmatched point.  But if there is
an unmatched point then its location is a translation-invariant random
variable on $\R^d$, which is impossible.  More precisely, by
translation-invariance of the Poisson process and uniqueness of the stable
matching, every unit cube in $\R^d$ has equal probability $p$ of containing
an unmatched point.  We can partition $\R^d$ into unit cubes indexed by
$\Z^d$, so the probability that there exists an unmatched point is
$\sum_{z\in\Z^d} p$.  Since this sum must be finite, $p=0$, whence the sum is
$0$.
\end{proof}

Despite the simplicity of the above proof, there is something subtle and
mysterious about the argument.  What probability-one property of the Poisson
process does it use?  In other words, is there some easily described set
$\mathcal A$ of subsets of $\R^d$ such that (a) the Poisson process lies in
$\mathcal A$ with probability $1$, and (b) Bob wins on any $L\in\mathcal A$?
We do not know of such a set, except for unsatisfying choices such as
$\mathcal{A}=\{L:L\text{ has a perfect stable matching}\}$ or
$\mathcal{A}=\{L:\text{ Bob wins}\}$.  As we have seen, the set of $L$ with
distinct inter-point distances and no descending chains satisfies (a) but not
(b). The proof of \cref{inf} uses translation-invariance of the Poisson
process in a fundamental way that apparently cannot be easily reduced to such
a probability-one property. Many elegant arguments in probability theory
involve an appeal to some symmetry or invariance property of this kind.
In the next section we will use stronger probabilistic properties of Poisson
processes -- deletion-tolerance and ergodicity.

In fact, the algorithm in the proof of Proposition \ref{prop:stable_matching}
leads to a perfect stable matching for a large class of translation-invariant
point processes on $\mathbb{R}^d$ -- see \cite[Proposition 9]{Pom}. The
conclusion of Theorem \ref{inf} hence remains valid for this class of
processes.

The article \cite{Pom} is also concerned with the distribution of the
distance from a point to its partner in the stable matching.  These distances
are potentially relevant to issues of computational complexity and length of
the game.  For instance, if Alice is required to place her first frog within
distance $r$ of the origin, how difficult can she make it for Bob to win?  We
leave these interesting questions for future investigation.


\section{Colored frogs, colored points}

\begin{figure}
\centering
{\hfill
\includegraphics[width=.32\textwidth]{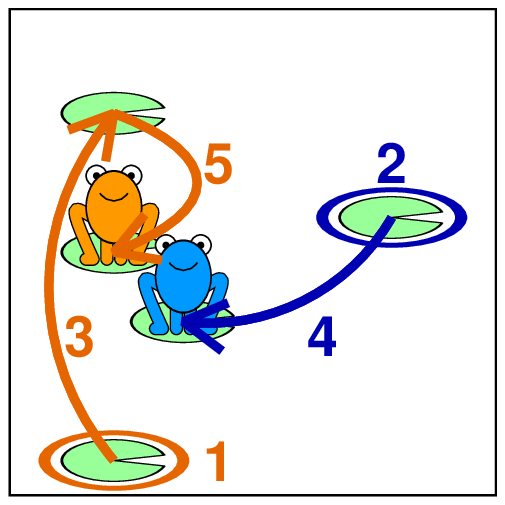}\hfill
\includegraphics[width=.32\textwidth]{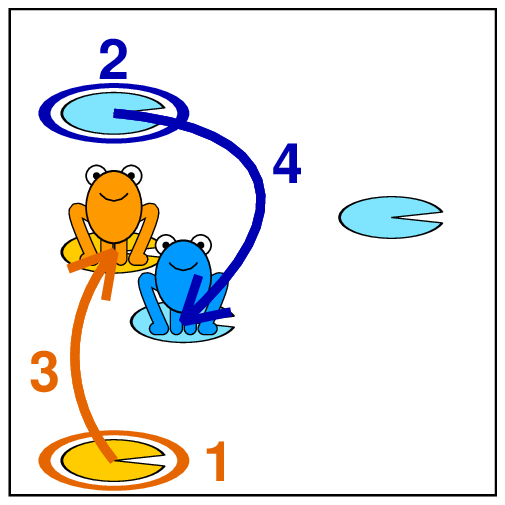}\hfill
\includegraphics[width=.32\textwidth]{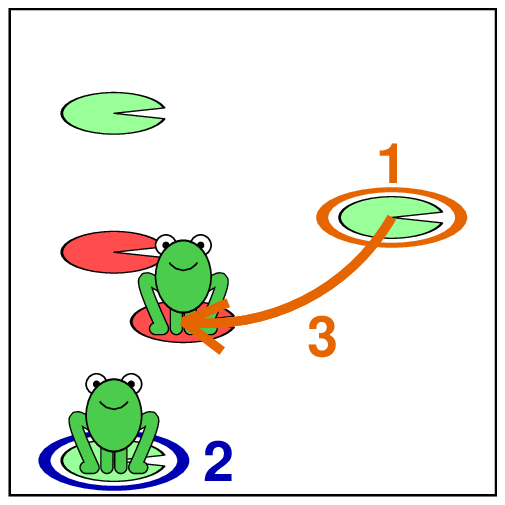}
\hfill}
\caption{Three variant games: (a) colored friendly frogs, in which each player may only move their own frog; (b) colored friendly frogs on colored points, where in addition a frog may only occupy a point of its own color; (c) fussy frogs, in which the two frogs may not both occupy red points. }\label{col-games}
\end{figure}

In this section we consider variants of friendly frogs in which frogs and/or
lily pads have multiple colors, and the allowed moves are correspondingly
restricted. Throughout we take $L$ to be an infinite set satisfying the
assumptions of Proposition \ref{prop:stable_matching}.

\subsection{Colored frogs}

First we introduce the \df{colored friendly frogs} game.
Here, Alice starts by placing an \df{amber} frog on some
point of $L$, then Bob places a \df{blue} frog on a
different point.  Subsequently, the game proceeds exactly
as before, except that Alice may only move the amber frog,
and Bob may only move the blue frog. As before, a player
who cannot move loses.

A two-frog position can now be specified by an
\emph{ordered} pair $(x,y)$, where $x$ is the location of
the frog of the previous player to move, and $y$ the
location of the frog of the next player.

Rather than requiring an entirely new analysis, it turns out that the
P-positions can again be described in terms of the stable matching $M$ of
$L$. If $|x-y|\leq |x-M(x)|$ then we say that $x$ \df{desires} $y$. (This
terminology is natural given the interpretation of preferences described
earlier.) Note the use of the weak inequality $\leq$, so that a point desires
its own partner.  Here is the analogue of Proposition \ref{stable-game} for
colored friendly frogs.

\begin{prop}\label{prop:desire}
Suppose $L\subset \R^d$ has all pairwise distances distinct and has no
infinite descending chains.  Let $M$ be the stable matching of $L$, and
consider the colored friendly frogs game on $L$. The position $(x,y)$ is a
P-position if and only if $x$ desires $y$.
\end{prop}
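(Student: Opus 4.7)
The plan is to follow the same blueprint as in the proof of \cref{stable-game}: since $L$ has no infinite descending chains the game terminates (the inter-frog distance strictly decreases with each move, so any infinite play would generate an infinite descending chain in $L$), and it therefore suffices to verify conditions (N) and (P) for the proposed characterisation. From a two-frog position $(x,y)$, the next player --- who owns the frog at $y$ --- must move it to some $z\in L\setminus\{x\}$ with $|z-x|<|y-x|$, yielding the new ordered position $(z,x)$ in which the roles of the two frogs are swapped.

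For (N), suppose $x$ does \emph{not} desire $y$, i.e.\ $|x-y|>|x-M(x)|$; in particular $M(x)\ne\infty$. I would take $z:=M(x)$, which gives $z\ne x$ and $|z-x|=|M(x)-x|<|y-x|$, so this is a legal move. Since $M(z)=M(M(x))=x$ we have $|z-M(z)|=|z-x|$, so $z$ desires $x$ with equality, and $(z,x)$ is therefore a P-position, as required.

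For (P), suppose $x$ desires $y$ and consider any legal move to $(z,x)$, so $|z-x|<|y-x|\le|x-M(x)|$. I need to show that $z$ does \emph{not} desire $x$, i.e.\ $|z-x|>|z-M(z)|$. Assume for contradiction that $|z-x|\le|z-M(z)|$. If $M(z)=x$ then also $M(x)=z$, whence $|x-M(x)|=|x-z|<|x-y|\le|x-M(x)|$, which is absurd. Otherwise $\{z,M(z)\}\ne\{z,x\}$, so the distinct-distances hypothesis promotes the weak inequality to the strict one $|z-M(z)|>|z-x|$; combined with $|x-M(x)|\ge|x-y|>|x-z|$, this shows that $\{x,z\}$ is an unstable pair for $M$, contradicting \cref{prop:stable_matching}.

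The main obstacle I anticipate lies in the case analysis for (P): one must handle the possibility that a hypothetical bad move lands exactly on the stable partner of $x$ (the equality case in the definition of \emph{desires}), and one must convert the weak inequality supplied by the assumed failure of (P) into a strict one in order to invoke stability of $M$. Both difficulties are dispatched by the distinct-distances hypothesis together with the contrapositive of stability.
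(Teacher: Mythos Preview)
Your proof is correct and follows the same approach as the paper's own argument: verify conditions (N) and (P), using for (N) the move to $M(x)$ and for (P) the observation that a move to $z$ with $z$ desiring $x$ would make $\{x,z\}$ an unstable pair. If anything, your treatment of (P) is more careful than the paper's, which simply asserts that such $x,z$ ``would constitute an unstable pair'' without explicitly separating out the case $M(z)=x$ or invoking distinct distances to upgrade the weak inequality $|z-x|\le|z-M(z)|$ to a strict one.
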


\begin{proof}
Again it suffices to check the conditions (N) and (P). For (N), if
$|x-y|>|x-M(x)|$, then the frog at $y$ can be moved to $M(x)$.   On the other
hand, for (P), if $|x-y|\leq |x-M(x)|$, then there cannot exist $z\in L$ with
$|x-z|<|x-y|$ and $|x-z|\leq |z-M(z)|$, since in that case $x$ and $z$ would
constitute an unstable pair. Hence moving the frog at $y$ must result in a
position $(z,x)$ with $|x-z|>|z-M(z)|$.
\end{proof}

Note that an unmatched point in the stable matching is not desired by any
other point, since that pair would be unstable. Hence, if the stable matching
of $L$ has one unmatched point, then Alice wins colored friendly frogs by
placing her amber frog at the unmatched point. If the matching is perfect,
then Bob wins, e.g.\ by placing his blue frog at the partner of Alice's
initial point. The outcome in hence the same as in the original friendly
frogs game.  In particular, we have the following.

\begin{cor}
Let $L$ be a Poisson process on $\R^d$.  With probability
$1$, colored friendly frogs on $L$ is a win for Bob.
\end{cor}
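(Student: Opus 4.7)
The plan is to combine the characterization of P-positions in \cref{prop:desire} with the fact, established inside the proof of \cref{inf}, that the stable matching of a Poisson process on $\R^d$ is almost surely perfect.

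First I would invoke (the argument of) \cref{inf}: by \cref{prop:stable_matching} there is a unique stable matching $M$ of $L$; by translation invariance of the Poisson process together with this uniqueness, the location of any unmatched point would be a translation-invariant random variable on $\R^d$, which is impossible. Hence, with probability one, $M$ is perfect, so every $x \in L$ has a partner $M(x) \in L \setminus \{x\}$.

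Conditional on this event, I would exhibit a winning strategy for Bob. Whatever point $x \in L$ Alice chooses for her amber frog, Bob places his blue frog at $y := M(x)$; this is a legal opening move since $M(x) \neq x$. The resulting position is $(x, M(x))$, and trivially $|x - M(x)| \leq |x - M(x)|$, so $x$ desires $M(x)$. By \cref{prop:desire}, $(x, M(x))$ is a P-position, so Alice, as the next player to move, loses under optimal play. Thus Bob has a winning strategy with probability one.

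There is essentially no obstacle here; the heavy lifting has already been done in \cref{inf} (perfectness of the stable matching for the Poisson process) and \cref{prop:desire} (the stable-matching characterization of P-positions in the colored game). The corollary merely transports these two facts into the colored-frogs setting, exactly matching the remark made just before the statement that if $M$ is perfect then Bob wins by responding at the partner of Alice's initial point.
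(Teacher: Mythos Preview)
Your proposal is correct and follows essentially the same approach as the paper: the argument immediately preceding the corollary explains that if the stable matching is perfect then Bob wins by placing his blue frog at the partner of Alice's opening point, and the corollary simply records that this applies to the Poisson process via the perfectness established in (the proof of) \cref{inf}. Your write-up makes the same deductions explicit and invokes \cref{prop:desire} just as the surrounding text does.
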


Indeed, Bob may use the same strategy in the colored and uncolored games,
always moving to a matched pair. Does this mean that the games are
essentially identical? No. To highlight an interesting difference, let us
modify the rules in a way that favors Alice.   In \df{shy} friendly frogs, we
fix a constant $c>0$, and stipulate that Bob, on his opening move, cannot
place the second frog within distance $c$ of the first frog.  (But we place
no such restriction on subsequent moves.) Shy colored friendly frogs is
defined analogously. Surprisingly, the outcome now differs between the two
variants; the proof will employ an interesting probabilistic argument.

\begin{theorem}
Let $L$ be a Poisson process on $\R^d$, and fix $c>0$. With
probability 1:
\begin{enumerate}
\item[(i)]  shy friendly frogs is a win for Alice;
\item[(ii)] shy colored friendly frogs is a win for Bob.
\end{enumerate}
\end{theorem}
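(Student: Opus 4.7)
For part (i) the plan is to have Alice exploit short-matched pairs in $L$. The Palm probability $\PP^o(|M(0)|\leq c)$ is strictly positive — under $\PP^o$, $L\setminus\{0\}$ is an ordinary Poisson process of intensity $\lambda$, and with positive probability there is a point $z$ of $L$ with $|z|<c/2$ that is mutually closest to $0$, forcing $|M(0)|<c$. By translation invariance and ergodicity the set $\{x\in L : |x-M(x)|\leq c\}$ therefore has positive density, and in particular is non-empty almost surely. Alice picks any such $x$. Bob's shy constraint forces $y\neq M(x)$ and hence $\{x,y\}\notin M$, which by \cref{stable-game} is an N-position. Alice then follows the standard winning strategy of always moving to a matched pair in $M$, which terminates in her favor since the game must end.

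For part (ii) the plan is to have Bob reply with a $y\in L$ making the ordered position $(y,x)$ a P-position in the colored game. By \cref{prop:desire} this requires $|y-x|>c$ and $|y-x|\leq|y-M(y)|$ ($y$ desires $x$). The easy case $|x-M(x)|>c$ is handled by $y=M(x)$, exactly as in the unshy colored game. The real content is the remaining case $|x-M(x)|\leq c$, where Bob is barred from $M(x)$: I aim to show that, almost surely, for every such $x$ some qualifying $y$ nonetheless exists, and Bob plays any such $y$.

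To prove the claim I would work under the Palm measure at $0$ (so $x=0\in L$) and seek $y\in L\setminus\{0\}$ with $c<|y|\leq|y-M(y)|$. By translation invariance the density, at a location $y$, of points with $|y-M(y)|>|y|$ is the positive quantity $\lambda\,\PP^o(|M(0)|>|y|)$. Using tail bounds on $\PP^o(|M(0)|>r)$ for the Poisson stable matching established in \cite{Pom}, the expected number of such $y$ in the annulus $B(0,R)\setminus B(0,c)$ diverges as $R\to\infty$. An ergodic / Borel--Cantelli argument along a dyadic sequence of scales $R_k=2^k$, in which deletion-tolerance of the Poisson process is used to decouple the relevant events across well-separated annuli, then upgrades this divergence of expectations into almost-sure existence of a qualifying $y$. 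Since the conclusion holds at the Palm point, it extends by translation invariance (and ergodicity of $L$) to every $x\in L$ simultaneously, so Bob's strategy succeeds almost surely.

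The main obstacle is the decoupling step in the last argument: the stable matching is inherently long-range, since changes at one location can in principle propagate arbitrarily far, so the events ``a qualifying long-matched $y$ lies in annulus $A_k$'' across $k$ are not genuinely independent. To carry Borel--Cantelli through one must truncate this long-range dependence — for instance by arguing that, outside an event of small probability, the portion of the stable matching inside $A_k$ is already determined by the Poisson points in a slightly enlarged window, so that non-overlapping enlarged windows furnish independent chances. This is precisely where the combination of deletion-tolerance, ergodicity, and the partner-distance tail bound from \cite{Pom} is essential, and where I expect the bulk of the work to lie.
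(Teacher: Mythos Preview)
Your argument for (i) is correct and is the paper's idea: Alice opens at a point $x$ whose stable partner is within distance $c$, so Bob's shy constraint bars him from $M(x)$ and any legal $y$ gives $\{x,y\}\notin M$. Your Palm/ergodicity existence argument is fine, though heavier than needed---any mutually nearest pair at separation below $c$ already does the job.

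For (ii) your route diverges from the paper's, and the gap you flag is real rather than cosmetic. You aim to find a far desirer of $x$ via divergence of an expected count over annuli plus Borel--Cantelli, with the decoupling of the events ``a long-matched $y$ lies in $A_k$'' left open. That decoupling is genuinely delicate: the stable matching has long-range dependence (and note that your density computation $\lambda\,\PP^o(|M(0)|>|y|)$ is for the unconditioned process, whereas you are working under the Palm measure at $0$, so the extra point at the origin already perturbs the matching everywhere). Making the windowed-locality picture precise would require quantitative control that you have not supplied.

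The paper bypasses all of this with a short qualitative argument proving the stronger fact that, almost surely, \emph{every} point of $L$ is desired by infinitely many others. Let $X$ be the $L$-point nearest the origin and $D$ the set of points desiring $X$. On $\{|D|<\infty\}$, delete from $L$ the finite set $D\cup M(D)\setminus\{X\}$; one checks that the stable matching of the thinned set is simply the restriction of $M$, in which $X$ is now unmatched. Deletion-tolerance of the Poisson process makes the law of the thinned configuration absolutely continuous with respect to the original, so an unmatched point would occur with positive probability---contradicting perfectness of the Poisson stable matching. Hence $|D|=\infty$ almost surely, and in particular some $y\in D$ satisfies $|y-X|>c$, giving Bob his reply. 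Deletion-tolerance is used once, directly, to force the contradiction; no tail bounds, annuli, or decorrelation are needed.
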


\begin{proof}
For (i), Alice places the first frog on any point $x$ whose partner $M(x)$ is
at least distance $c$ away.  Such a point exists, since the stable matching
$M$ is perfect, but the Poisson process has points whose nearest neighbor is
at least distance $c$ away.

Turning to (ii), we claim that with probability $1$, every point is desired
by infinitely many others.  This implies in particular that whatever Alice's
opening move $x$, there exists a point $y$ with $|x-y|>c$ that desires $x$,
so Bob wins by placing his frog there, by \cref{prop:desire}.  The
claim follows from \cite[Theorem~1.3~(i)]{perc}.  Since the proof in our case
is short, we include it.

Let $X$ be the (random) point of $L$ closest to the origin.  It suffices to
show that infinitely many points desire $X$.  Let $D$ be the set of points
that desire $X$.  Modify the set $L$ as follows.  Whenever $D$ is finite,
delete all points of $D$ and their partners, except for $X$ itself (which is
the partner of a point in $D$).  It is easy to check that the stable matching
of the modified set is simply the restriction of $M$ to the points that
remain.  In particular, if $D$ was finite then $X$ is now unmatched. However,
the Poisson process is \df{deletion-tolerant}, which is to say: deleting any
finite set of points, even in a way that depends on the process, results in a
point process whose distribution is absolutely continuous with respect to the
original distribution. (See e.g.\ \cite[Lemma~18]{Pom} or
\cite{holroyd-soo}.) That is, the deletion cannot cause any event of zero
probability to have positive probability.  (Intuitively, the picture after
deletion is still plausible.)  Since the stable matching of the Poisson
process is perfect with probability $1$, we deduce that $D$ was infinite with
probability $1$.
\end{proof}

\subsection{Colored points}

There is a further natural variant of colored friendly frogs in which the two
frogs are restricted to different point sets. Let $\LA$ and $\LB$ be two
disjoint subsets of $\mathbb{R}^d$ whose union satisfies the assumptions of
Proposition \ref{prop:stable_matching}.  We refer to points of $\LA$ and
$\LB$ as amber and blue, respectively.  We stipulate that Alice's amber frog
can only occupy an amber point, and Bob's blue frog can only occupy a blue
point. Otherwise the rules are as for colored friendly frogs.  We call this
game \df{colored friendly frogs on colored points}. The P-positions in this
case are given by a two-color variant of stable matching.


A two-color matching of $(\LA,\LB)$ is a set $M$ of pairs of points
$(x,y)\in\LA\times\LB$ such that each point is contained in at most one pair.
As in the one-color case, the matching is perfect if each point of $\LA\cup
\LB$ is included in a pair. A two-color matching $M$ of $(\LA, \LB)$ is
\df{stable} if and only if there do not exist $x\in \LA$ and $y\in \LB$  with
$|x-M(x)|$ and $|y-M(y)|$ both strictly greater than $|x-y|$.

Proposition \ref{prop:stable_matching} and Proposition \ref{prop:desire}
remain true for this game, with $L$ replaced by $(\LA,\LB)$, ``stable
matching" replaced by ``stable two-color matching", and a revised definition
of desire under which a point can only desire a point of the opposite color
(see \cite{Pom} for more detail). The same proofs apply with only minor
adjustments. Specifically, in the algorithm described in the proof of
Proposition \ref{prop:stable_matching}, points of the same color cannot be
matched to each other. Therefore, instead of leaving at most one point
unmatched, it follows from the same arguments that all unmatched points must
be of the same color.

Note that an unmatched point desires all points of the other color, and an
unmatched point cannot be desired by any point of the other color, since they
would be an unstable pair. If the two-color stable matching has unmatched
amber points, then Alice wins by placing her frog at one of these points.  If
not, Bob wins by placing his frog on an unmatched blue point (if one exists),
or on the partner of Alice's opening move.

\begin{theorem}\label{th:2c}
Let $\LA$ and $\LB$ be two independent Poisson processes on $\mathbb{R}^d$,
with respective intensities $\alpha$ and $\beta$. Consider colored friendly
frogs with colored points on $(\LA, \LB)$. The game is a win for Bob if
$\alpha\leq\beta$, and a win for Alice if $\alpha>\beta$.
\end{theorem}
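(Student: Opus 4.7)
The plan is to reduce \cref{th:2c} to a statement about the unmatched points of the stable two-color matching $M$ of $(\LA,\LB)$. From the discussion preceding the theorem, Alice wins if $M$ has at least one unmatched amber point, and Bob wins otherwise. So what I need to show is that almost surely there exist unmatched amber points when $\alpha>\beta$, and that almost surely every amber point is matched when $\alpha\leq\beta$.

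To that end I would introduce the densities
\[
u_A := \E\bigl|\{x\in \LA\cap [0,1]^d:\, M(x)=\infty\}\bigr|, \qquad u_B := \E\bigl|\{x\in \LB\cap [0,1]^d:\, M(x)=\infty\}\bigr|.
\]
Translation invariance of the joint law of $(\LA,\LB)$, together with uniqueness of the stable two-color matching (the analogue of \cref{prop:stable_matching}), ensures that these are well defined. A short mass transport argument, sending one unit of mass from each matched amber point to its partner in $\LB$, shows that the density of matched amber points equals the density of matched blue points, yielding the key identity
\[
\alpha - u_A \;=\; \beta - u_B.
\]

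I would then combine this with two further ingredients. The first is the observation recalled in the excerpt that, in any realisation, all unmatched points of a stable two-color matching must share a single color: an unmatched amber together with an unmatched blue would form an unstable pair, since each strictly prefers any finite-distance partner. The second is ergodicity of the joint Poisson process $(\LA,\LB)$ under translations of $\R^d$: the translation-invariant events $E_A := \{\LA\text{ contains an unmatched point}\}$ and $E_B := \{\LB\text{ contains an unmatched point}\}$ each have probability $0$ or $1$, and $\PP(E_A)>0$ if and only if $u_A>0$, and likewise for $B$.

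The three cases now fall out of the identity $u_A-u_B=\alpha-\beta$. If $\alpha>\beta$, then $u_A>0$, so $\PP(E_A)=1$ by ergodicity, and Alice wins. If $\alpha<\beta$, then symmetrically $\PP(E_B)=1$, and the single-color property forces $E_A$ to fail almost surely, so every amber point is matched and Bob wins. If $\alpha=\beta$, then $u_A=u_B$; if either were positive, ergodicity would give $\PP(E_A)=\PP(E_B)=1$, contradicting the single-color property, so both vanish and the matching is perfect, again a win for Bob. The only point requiring care is cleanly verifying ergodicity for the superposition of two independent Poisson processes and translation equivariance of the stable matching (inherited from its uniqueness); these are standard, and I do not anticipate any further obstacle.
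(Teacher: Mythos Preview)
Your proof is correct and follows essentially the same route as the paper: the key identity $\alpha-u_A=\beta-u_B$ via mass transport, the single-color property of the unmatched set, and ergodicity to pass from positive density to an almost sure statement. The only minor difference is in the case $\alpha=\beta$, where the paper invokes the color-swap symmetry directly to get $\PP(E_A)=\PP(E_B)$, whereas you obtain $u_A=u_B$ from the mass transport identity---your treatment is thus slightly more uniform across the three cases.
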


The probabilistic setup of \cref{th:2c} is equivalent to that of a single
Poisson process of intensity $\alpha+\beta$ in which each point is
independently declared amber or blue with respective probabilities
$\alpha/(\alpha+\beta)$ and $\beta/(\alpha+\beta)$.  (See e.g.\
\cite{Daley_VJ}.) The conclusion of \cref{th:2c} is an example of a
\emph{phase transition}: an abrupt qualitative change of behavior as a
parameter crosses a critical value.

To prove Theorem \ref{th:2c}, we need a property that is stronger than
translation invariance. A point process is said to be \df{ergodic} if every
event that is invariant under translations has probability 0 or 1. For
example, the event that there is no point within distance 1 of the origin is
not translation-invariant, but the event that there are infinitely many
disjoint balls of radius 1 that contain no points is translation-invariant. A
Poisson process is ergodic (and so is the two-color process made up of two
independent Poisson processes) -- this can be deduced using the independence
of the process on disjoint subsets of the space.  (See e.g.\
\cite{Daley_VJ}.)

\begin{proof}[Proof of Theorem \ref{th:2c}]
First let us consider the case $\alpha=\beta$.
The set of unmatched points in the stable matching is either empty,
or consists only of amber points or only of blue points.
Applying ergodicity, one of these three events
must have probability 1, and the others probability 0.
But by symmetry the probabilities of unmatched amber points
and of unmatched blue points must be equal.  Hence
they are both 0, and with probability 1 the matching is perfect,
giving a win for Bob.

When the two intensities are different, it is natural to expect that we
cannot match amber points to blue points in a translation-invariant way
without leaving some of the higher-intensity set unmatched.  Making this
intuition rigorous may at first appear tricky.  We might compare the numbers
of points in a large ball, but perhaps many points have their partners
outside the ball. And where should we use translation-invariance? Since $L_A$
and $L_B$ are countable infinite sets, there certainly exists \emph{some}
perfect matching between them.

In fact, there is a clean solution, using a simple but powerful tool, the
\emph{mass transport principle}.  (See \cite{blps,haggstrom97} for
background.) Consider any function $f:\Z^d\times \Z^d\to[0,\infty]$ that is
translation-invariant in the sense that $f(s,t)=f(s+u, t+u)$ for all
$s,t,u\in\Z^d$. Then note that $\sum_{t\in\Z^d} f(0,t)=\sum_{t\in\Z^d}
f(-t,0)= \sum_{s\in\Z^d} f(s,0)$.  It is sometimes helpful to think of
$f(s,t)$ as the mass sent from $s$ to $t$.

Now suppose $\alpha<\beta$. For $s\in\Z^d$, let $Q_s$ be the unit cube
$s+[0,1)^d$ in $\R^d$. Define $f(s,t)$ to be the expected number of amber
points in $Q_s$ that are matched to blue points in $Q_t$. This $f$ is
translation-invariant in the sense of the previous paragraph, because of
translation-invariance of the Poisson processes. Thus, $\sum_s f(s,0)$, which
is the expected number of matched blue points in $Q_0$, is equal to $\sum_t
f(0,t)$, which is the expected number of matched amber points in $Q_0$. The
latter is at most $\alpha$, the expected total number of amber points in
$Q_0$. But the expected number of blue points in $Q_0$ is $\beta$, so the
expected number of unmatched blue points in $Q_0$ is at least $\beta-\alpha$.
In particular, the probability that there exists an unmatched blue point is
positive. Applying ergodicity again shows that this probability is therefore
$1$. Thus Bob wins.

Similarly, if $\alpha>\beta$ then with probability 1 there are unmatched
amber points, leading to a win for Alice.
\end{proof}

Once again, the above proof uses invariance and ergodicity in a subtle and
fundamental way that cannot easily be reduced to probability $1$ properties
of the point process.  What property of $(L_A,L_B)$ guarantees Bob wins when
$\alpha=\beta$?  It is not that $L_A$ and $L_B$ have equal asymptotic
density.  Modifying the example in \cref{inf-ex}, that holds if $L_A$
consists of one point in every interval $[3i,3i+1]$ for $i\in\Z$ while $L_B$
has one point in each such interval except $[0,1]$.  But here Alice wins.

Again, the conclusion of Theorem \ref{th:2c} remains valid for a large class
of translation-invariant point processes; see \cite{Pom} for details of the
corresponding results for stable matchings.

\subsection{Fussy Frogs}

Despite the relatively complete analysis in the last two cases, we need not
go far to reach an unsolved problem.  In \df{fussy friendly frogs}, the
points again have two colors, now \df{green} and \df{red}, denoted by sets
$L$ and $L_R$ respectively.  The rules are as in the original friendly frogs
game (in particular, the two frogs are once again identical and can be moved
by either player), except that it is not permitted that \emph{both} frogs
simultaneously occupy red points.

\begin{problem*}
Let $L$ and $L_R$ be independent Poisson processes on $\R^d$ with respective
intensities $1$ and $\rho$.  Do there exist $d\geq 1$ and $\rho>0$ for which
Bob wins fussy friendly frogs with positive probability?
\end{problem*}

Fussy friendly frogs again has an associated matching, the analogue of stable
matching under the restriction that red points cannot be matched to each
other. This matching can be constructed iteratively as in the proof of
\cref{prop:stable_matching}, and Bob wins if and only if it is perfect.
Ergodicity shows that this has probability $0$ or $1$ for each $\rho$ and
$d$.  When $\rho> 1$ (and even when $\rho>1-\epsilon$ for some
$\epsilon=\epsilon(d)>0$), it is not difficult to show that there are
unmatched red points (so Alice wins); the question is whether this holds for
every positive $\rho$. This is not known for any dimension $d$, although in
\cite{AnderJamesYuval} it is proved that for any fixed $\rho>0$, there exists
$d_0=d_0(\rho)$ such that there are unmatched red points for all $d\geq d_0$.

\section{Variations on a theme}

In this section we consider some further variant
games, in which the rules are modified in more fundamental
ways.

\subsection{Playing to Lose}

We consider a \textbf{mis\`{e}re} version of friendly frogs. In general, a
game is said to be played under mis\`ere rules if the legal moves are the
same, but a player who cannot move now \emph{wins} the game instead of losing
it.  This means that a player tries to avoid moving to positions where the
next player cannot move.  Specifically, in mis\`ere friendly frogs, a player
wants to avoid having to move to a mutually closest pair.

Let $L$ satisfy the assumptions of Proposition \ref{prop:stable_matching}.
The P-positions in the mis\`{e}re game are given by a variant of the stable
matching of $L$ with the added restriction that mutually closest points
cannot be matched.  A matching $\tM$ of $L$ is said to be stable subject to
this restriction if there do not exist $x,y\in L$ that are not mutually
closest and with $|x-\tM(x)|$ and $|y-\tM(y)|$ both strictly greater than
$|x-y|$. The unique matching with this property is obtained by the following
modification of the iterative procedure used to construct the unrestricted
stable matching.  Call $x$ and $y$ potential partners of each other if they
are both unmatched and they are not mutually closest points of $L$; then
match all pairs $x$ and $y$ that are each others' mutually closest potential
partner.  Repeat indefinitely. The resulting matching has at most two
unmatched points (and if there are two such points, they must be mutually
closest points of $L$).

\begin{prop}\label{prop:misere}
Let $L\subset\mathbb{R}^d$ have distinct distances and no infinite descending
chains.  Let $\tM$ the stable matching of $L$ subject to the restriction that
mutually nearest neighbors cannot be matched. In mis\`ere friendly frogs, the
position with two frogs at $x$ and $y$ is a P-position if and only if $x$ is
matched to $y$ in $\tM$.
\end{prop}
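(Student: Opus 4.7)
The plan is to follow the template of the proof of \cref{stable-game}: since $L$ has no infinite descending chains the game terminates, so it suffices to check the inductive conditions characterizing P- and N-positions, modified for the mis\`ere convention. The one adjustment is that terminal two-frog positions (the mutually closest pairs of $L$, which are the only pairs from which neither frog has a legal move) are now N-positions rather than P-positions. By construction, $\tM$ contains no mutually closest pair, so the claimed P-positions are never terminal, and no conflict arises. It then suffices to verify (P) that every move from a pair in $\tM$ leaves $\tM$, and (N) that from every non-terminal pair outside $\tM$ there is a move into $\tM$.

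Condition (P) is immediate from $\tM$ being a matching: any move from $\{x,\tM(x)\}$ displaces exactly one frog to some $z\notin\{x,\tM(x)\}$, and in the resulting pair the remaining endpoint is already paired in $\tM$, so the new pair is not in $\tM$.

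The substance lies in (N), and this is where I would invoke the restricted stability of $\tM$. Let $\{x,y\}\notin\tM$ with $x,y$ not mutually closest in $L$. Then $x$ and $y$ are potential partners of each other, so by restricted stability we cannot have both $|x-\tM(x)|>|x-y|$ and $|y-\tM(y)|>|x-y|$, adopting the convention $|x-\tM(x)|=\infty$ when $x$ is unmatched. Without loss of generality $|x-\tM(x)|\leq|x-y|$; distinctness of distances together with $\tM(x)\neq y$ upgrade this to a strict inequality, and then moving the frog at $y$ to $\tM(x)$ is a legal move producing the pair $\{x,\tM(x)\}\in\tM$.

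The main obstacle is the bookkeeping when $x$ or $y$ is unmatched in $\tM$. If $x$ is unmatched but $y$ is not, the convention $|x-\tM(x)|=\infty$ forces the strict inequality to sit on the $y$-side instead, and the symmetric move (the frog at $x$ to $\tM(y)$) works. If both $x$ and $y$ were unmatched, then the structural fact recalled just before the proposition --- that $\tM$ has at most two unmatched points, and that two unmatched points occur only as a mutually closest pair of $L$ --- would force $\{x,y\}$ to be mutually closest, contradicting our non-terminal assumption. Thus every case yields the required move into $\tM$, completing (N).
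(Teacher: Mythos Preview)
Your proof is correct and follows essentially the same route as the paper: verify the mis\`ere conditions (P) and (N$'$), getting (P) from $\tM$ being a matching and (N$'$) from restricted stability forcing one of $x,y$ to have its $\tM$-partner within distance $|x-y|$. Your treatment is in fact a bit more careful than the paper's, spelling out the upgrade to strict inequality via distinct distances and handling the unmatched-point cases explicitly; for the last of these you could equally well appeal directly to restricted stability (two unmatched, non-mutually-closest points would themselves be an unstable pair) rather than to the structural fact about $\tM$.
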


\begin{proof}
With mis\`ere rules, all terminal positions are N-positions, and the
characterization of N-positions and P-positions is modified by replacing
condition (N) with:
\begin{itemize}
\item[(N$'$)] From every N-position that is not terminal, there is at
    least one move to a P-position.
\end{itemize}

Assume that $x$ and $y$ are not mutually closest and are not matched in $\tM$
(so that they hence define an N-position that is not terminal). If both
$|x-\tM(x)|>|x-y|$ and $|y-\tM(y)|>|x-y|$, then $x$ and $y$ would constitute
an unstable pair in $M$. Hence either the frog at $x$ could be moved to
$\tM(y)$, or the frog at $y$ could be moved to $\tM(x)$. The property (P)
follows since $\tM$ is a matching.
\end{proof}

This argument shows that the mis\'{e}re friendly frogs is a win for Alice if
and only if the restricted stable matching has exactly one unmatched point.

\begin{cor}
Let $L$ be a Poisson process on $\R^d$.  Mis\`{e}re friendly frogs is a win
for Bob with probability $1$.
\end{cor}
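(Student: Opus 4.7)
The plan is to mimic the proof of Theorem~\ref{inf}, but with the restricted stable matching $\tM$ in place of $M$ and invoking Proposition~\ref{prop:misere}.

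First I would check that Bob wins exactly when $\tM$ is perfect. By Proposition~\ref{prop:misere} the two-frog P-positions are the pairs of $\tM$, and Alice moves immediately after both placements; so Bob wins iff his opening response can create a P-position. If $\tM$ is perfect, Bob simply replies to Alice's $x$ with $\tM(x)$. If $\tM$ has some unmatched point $u$, Alice opens at $u$; then no response $y$ can place $\{u,y\}$ in $\tM$, so every response lands in an N-position. Non-terminal N-positions allow Alice to move to a P-position (by the argument in the proof of Proposition~\ref{prop:misere}, in which the case of an unmatched endpoint is handled by the convention $|u-\tM(u)|=\infty$), while the only terminal response arises when $y$ is mutually closest to $u$ in $L$ — i.e.\ when there are two unmatched points and Bob places at the second — and under misère rules a terminal position is a win for the next player, namely Alice. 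Hence Bob wins iff $\tM$ leaves no point unmatched.

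Next I would copy the invariance argument from Theorem~\ref{inf}. The matching $\tM$ is a deterministic function of $L$ that commutes with translations of $\R^d$, so translation-invariance of the Poisson process implies that the distribution of the random set of unmatched points is translation-invariant. Let $p$ be the probability that the unit cube $Q_0:=[0,1)^d$ contains an unmatched point; by invariance each translate $Q_z := z+[0,1)^d$ has the same probability $p$. Proposition~\ref{prop:misere} bounds the total number of unmatched points in $\R^d$ by $2$, so the expected number of cubes $Q_z$ containing an unmatched point, namely $\sum_{z\in\Z^d} p$, is at most $2$. Therefore $p=0$, and with probability $1$ the matching $\tM$ is perfect, giving Bob the win.

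The main obstacle is the bookkeeping in the first step: the restricted matching can leave two unmatched points (not just one), and one must verify that this still favors Alice under misère. The key observation is that two unmatched points of $\tM$ must be mutually closest in $L$, which simultaneously prevents Bob from reaching a P-position on his placement and, should he place at the twin, hands Alice the misère win at an immediately terminal position. Once that subtlety is resolved, the probabilistic content is essentially identical to the proof of Theorem~\ref{inf}.
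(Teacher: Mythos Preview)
Your argument is correct and follows the same route as the paper: show that Bob wins precisely when $\tM$ is perfect, then use translation-invariance to rule out a non-empty finite set of unmatched points. Your treatment is in fact more careful than the paper's on one point: the text preceding the corollary asserts that Alice wins \emph{if and only if} $\tM$ has exactly one unmatched point, whereas you correctly verify that the two-unmatched case (with the pair mutually closest) is also an Alice win---Bob's placement at the twin yields a terminal position, which under mis\`ere is a win for the next mover Alice, and any other placement gives a non-terminal N-position from which Alice reaches $\tM$. This discrepancy is immaterial for the corollary itself, since the invariance argument shows $\tM$ is a.s.\ perfect, but your bookkeeping is the accurate one.
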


\begin{proof}
The argument in the proof of \cref{inf} shows that the matching $\tM$ is
perfect with probability $1$ -- it is impossible for the unmatched points to
form a non-empty finite translation-invariant random set.
\end{proof}


\subsection{Blocking and multi-matching}

The games can be modified by allowing moves to be blocked.  Consider colored
friendly frogs, but suppose that in addition to the two frogs, there are $k$
\df{stones}. After moving or placing their frog, a player then places the
stones on any $k$ points (lily pads).  The other player is then forbidden
from moving their frog to any of those $k$ points on the next move.
(Equivalently, we can imagine that the next player tries to make a move, but
the previous player can reject it and request that they try a different move,
up to $k$ times. The chess variants \emph{compromise chess} and \emph{refusal
chess} are similar; see e.g.\ \cite{wastlund}.) The rules are otherwise as in
colored friendly frogs. A player loses if they cannot move, perhaps because
all possible moves are blocked by stones. We call this game \df{$k$-stone
colored friendly frogs}.

The P-positions are related to stable multi-matchings, which were introduced
and studied in \cite{perc,deijfen-holroyd-peres}. Let $L$ be an infinite set
satisfying the assumptions of \cref{prop:stable_matching}. Let $m\geq 1$. An
\df{$m$-multi-matching} or \df{$m$-matching} is defined analogously to a
matching, except that each point may be matched to up to $m$ other points.
The $m$-matching is \df{perfect} if each point is matched to exactly $m$
points. For an $m$-matching of $L$, let $D(x)$ denote the distance to the
most distant partner of $x$, with $D(x)=\infty$ if $x$ has strictly fewer
than $m$ partners. The matching is \df{stable} if and only if there do not
exist $x,y\in L$ that are not matched to each other with $D(x)$ and $D(y)$
both strictly greater than $|x-y|$. A pair of points violating this is called
unstable.  A point $x$ \df{desires} $y$ if $|x-y|\leq D(x)$.

Proposition \ref{prop:stable_matching} extends to stable $m$-matchings. The
following modification of the iterative procedure in its proof leads to the
unique stable $m$-matching of $L$.  Call two points potential partners if
they are not already matched to each other and if neither is already matched
to $m$ other points. Match all mutually closest potential partners. Repeat
indefinitely.   See \cref{multi-pic} (left) for an example.

We remark that in the stable $m$-matching there may be more than one point
that has strictly fewer than $m$ partners, but there cannot be more than $m$
of them (otherwise there would be two that are not matched to each other).
\begin{figure}
\centering
\includegraphics[width=.495\textwidth]{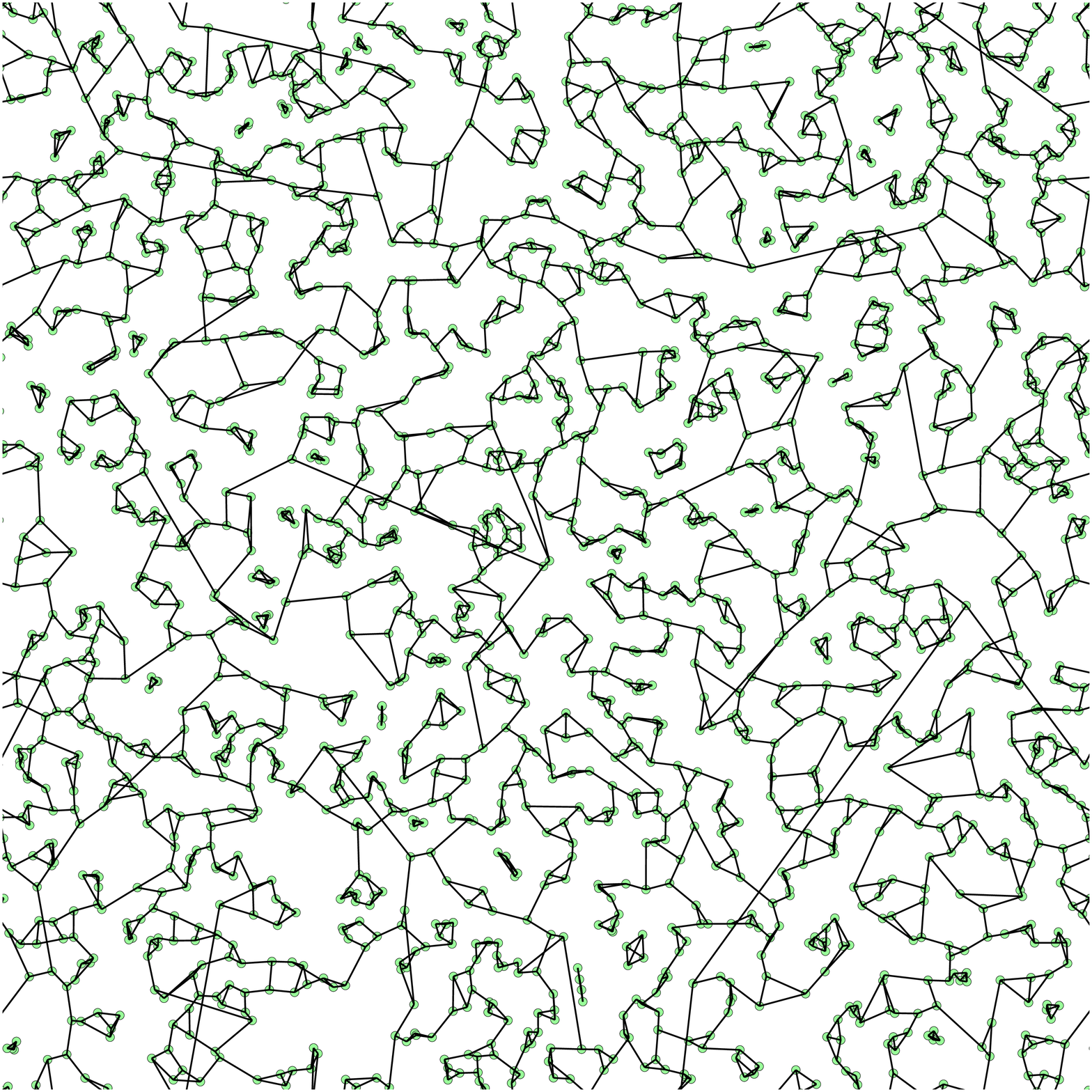}\hfill
\includegraphics[width=.495\textwidth]{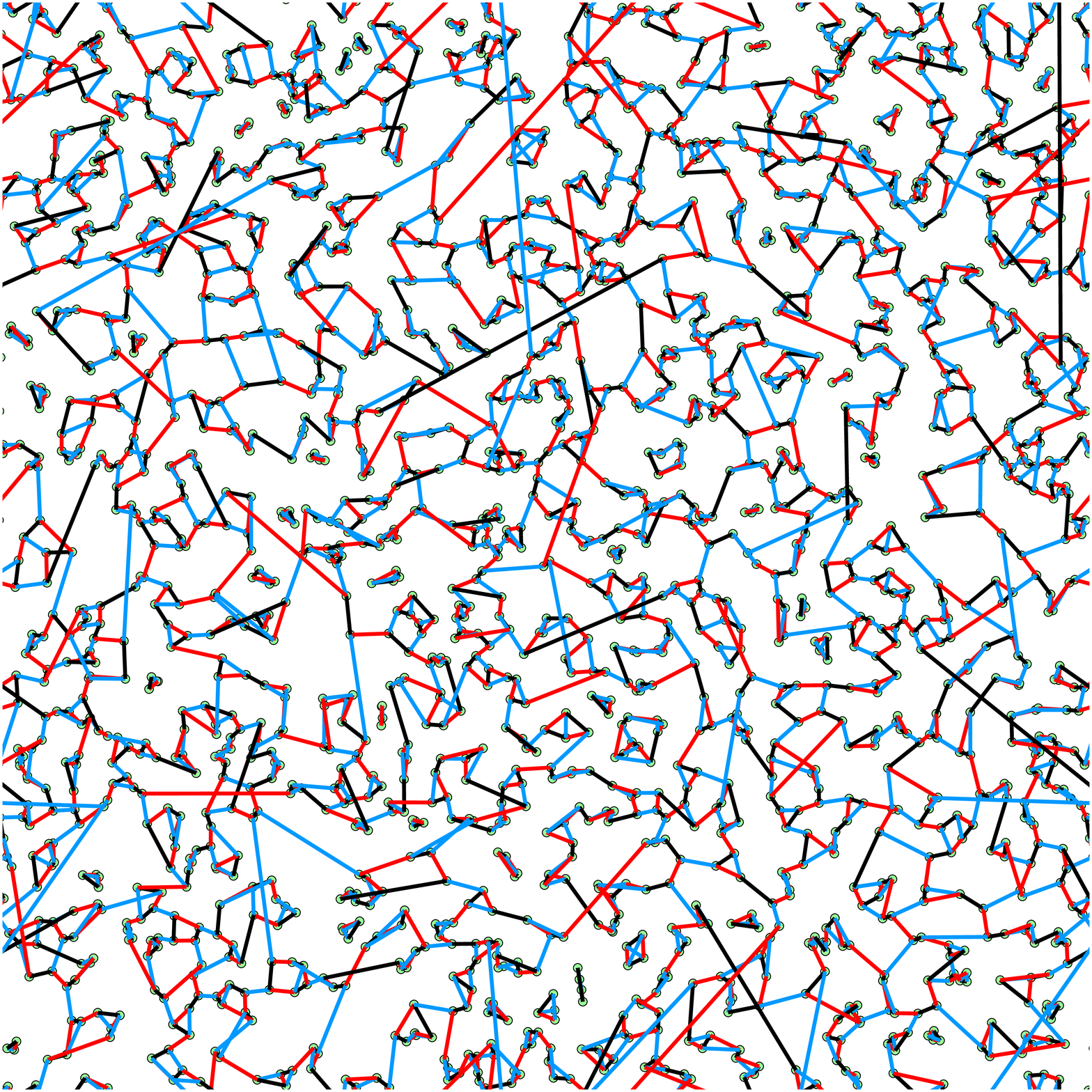}
\caption{Left: the stable $3$-multi-matching of random points in a torus.
Right: pairs having friendly frogs Sprague-Grundy values $0$ (black), $1$ (red),
 and $2$ (blue), for the same points.}\label{multi-pic}
\end{figure}

\begin{samepage}
\begin{prop}\label{prop:k} Let $L\subset\R^d$ have
distinct distances and no infinite descending chains, and assume that the
stable $m$-matching of $L$ is perfect.  Consider $k$-stone colored friendly
frogs.  Suppose that the frogs are at $x$ and $y$, with $y$ being the frog of
the next player.  This position is a P-position if and only if, in the stable
$(k+1)$-matching, $x$ desires $y$, and all partners of $x$ that are closer
than $y$ are blocked by stones.
\end{prop}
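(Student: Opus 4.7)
The plan is to verify the inductive characterization by checking the conditions (N) and (P) on the game's positions. Here a position is a triple $(x,y,S)$: the previous player's frog at $x$, the next player's frog at $y$, and the set $S$ (with $|S|=k$) of lily pads blocked by the stones placed on the previous turn. The distance between the frogs still strictly decreases on each move, so the no-descending-chain hypothesis forces termination, and verifying (N) and (P) by induction on the game tree suffices. Let $M$ be the stable $(k+1)$-matching and let $D(z)$ denote the distance from $z$ to its farthest partner in $M$.

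For (P), assume that $x$ desires $y$ and that every partner of $x$ at distance $<|x-y|$ lies in $S$. A legal move consists of choosing $z\in L\setminus(S\cup\{x\})$ with $|x-z|<|x-y|$ and then placing a new stone set $S'$ of size $k$. I must show that the resulting position $(z,x,S')$ fails the characterization for every such $z$ and $S'$, equivalently that $z$ has at least $k+1$ partners strictly closer than $x$ (this simultaneously gives both that $z$ does not desire $x$ and that no $S'$ can block all closer partners of $z$). Suppose otherwise, so $D(z)\ge|x-z|$. Either $x$ is itself a partner of $z$, in which case $z$ is a partner of $x$ with $|x-z|<|x-y|$, forcing $z\in S$ and contradicting legality of the move; or $x$ is not a partner of $z$, in which case distinct distances upgrade the inequality to $D(z)>|x-z|$ strictly, and combined with $D(x)\ge|x-y|>|x-z|$ this makes $\{x,z\}$ an unstable pair in $M$, contradicting its stability.

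For (N), suppose the characterization fails, so that either $x$ does not desire $y$ (case N1, $D(x)<|x-y|$) or $x$ desires $y$ but some partner $p$ of $x$ with $|x-p|<|x-y|$ lies outside $S$ (case N2). In case N1, all $k+1$ partners of $x$ are at distance at most $D(x)<|x-y|$, and since $|S|=k$ at least one, call it $p$, is unblocked; in case N2, take $p$ as given. Moving the frog at $y$ to $p$ is then legal. Since $x$ is itself one of $p$'s partners, $p$ desires $x$, and the partners of $p$ strictly closer than $x$ number at most $k$. The next player now places stones on exactly these closer partners (padded arbitrarily to size $k$), producing a position $(p,x,S')$ that satisfies the hypothesis of the characterization, hence a P-position by induction. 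Thus $(x,y,S)$ is an N-position.

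The main obstacle I expect is the (P) direction, where all combinations of a move $z$ together with a subsequent stone placement $S'$ must be ruled out at once. The clean framing is to reduce the failure of the characterization at $(z,x,S')$ to a condition on $z$ alone, namely that all $k+1$ partners of $z$ lie closer than $x$; stability of the $(k+1)$-matching is precisely what forces this condition for every legal move $z$, so no subsequent $S'$ can save the next player.
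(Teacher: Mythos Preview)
Your proof is correct and follows essentially the same approach as the paper: verify (N) and (P) directly, using stability of the $(k+1)$-matching in (P) to rule out any legal $z$ desiring $x$, and in (N) move to an unblocked partner $p$ of $x$ and block the remaining closer partners of $p$. Your treatment is in fact slightly more careful than the paper's in two places: you explain explicitly why failure of the characterization at $(z,x,S')$ for \emph{every} $S'$ reduces to the single condition that $z$ does not desire $x$, and in (N) you correctly count that at most $k$ partners of $p$ lie closer than $x$ (the paper's phrasing here is looser).
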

\end{samepage}

\begin{proof}
We check (N) and (P).  For (P), suppose the given conditions hold.  Since $x$
desires $y$ we have $D(x)\geq |x-y|$.  Thus, if the next player moves their
frog from $y$ to $z$, then $x$ also desires $z$.  But $z$ is not a partner of
$x$, because we assumed that all possible such $z$ are blocked.  Therefore
$z$ does not desire $x$ (otherwise they would be unstable), so the new
position is an N-position (regardless of where the player moves the stones).
We now check (N). If $x$ desires $y$ but some closer partner $z$ of $x$ is
not blocked, then the next player can move to $z$.  On the other hand, if $x$
does not desire $y$ then all the partners of $x$ are closer than $y$, and at
least one of them, $z$ say, is not blocked, so the next player moves there.
In either case, this player then blocks all $k-1$ partners of $z$ other than
$y$.
\end{proof}


\begin{theorem}
Let $L$ be a Poisson process on $\mathbb{R}^d$ and let $k\geq 1$. With
probability 1, $k$-stone colored friendly frogs on $L$ is a win for Bob.
\end{theorem}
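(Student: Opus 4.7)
The plan is to reduce the problem to showing that the stable $(k+1)$-multi-matching $M$ of $L$ is almost surely perfect, and then to exhibit Bob's winning strategy using \cref{prop:k}. Existence and uniqueness of $M$ come from the multi-matching extension of \cref{prop:stable_matching} mentioned above.

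The perfectness argument mirrors that of \cref{inf}. As noted in the remark preceding \cref{prop:k}, any stable $m$-multi-matching has at most $m$ ``under-matched'' points (those with strictly fewer than $m$ partners), since otherwise two such points would both have a free slot without being matched to each other, forming an unstable pair. Let $U$ be the set of under-matched points in the stable $(k+1)$-matching of $L$, so $|U|\le k+1$ almost surely. By uniqueness of the multi-matching and translation-invariance of the Poisson process, the distribution of $U$ is translation-invariant. Partitioning $\R^d$ into unit cubes indexed by $\Z^d$, each has a common probability $p$ of containing a point of $U$; the expected number of such cubes is at most $k+1$, so $p=0$ and hence $U=\emptyset$ almost surely.

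Given that $M$ is perfect, I would describe Bob's strategy. Suppose Alice's opening places her amber frog at $a\in L$ and her $k$ stones on a set $S_A\subset L$ with $|S_A|=k$. Because $a$ has $k+1$ partners in $M$ but $|S_A|=k$, Bob can pick a partner $b$ of $a$ with $b\notin S_A$, place his blue frog at $b$, and use his $k$ stones to block the (at most $k$) partners of $b$ in $M$ that are strictly closer to $b$ than $a$ is. Since $a$ is a partner of $b$ we have $|b-a|\le D(b)$, so $b$ desires $a$, and by \cref{prop:k} the resulting position is a P-position. Thereafter Bob plays the move prescribed in the (N) part of the proof of \cref{prop:k} to return to a P-position after each of Alice's moves; this move always exists because $M$ is perfect and Alice's $k$ stones cannot cover all $k+1$ partners of her current amber-frog location. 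Because $L$ has no infinite descending chains the game terminates, necessarily at a P-position with Alice unable to move, so Bob wins.

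The principal obstacle is the perfectness of the stable $(k+1)$-matching; once that is in hand, the rest is a routine application of \cref{prop:k}.
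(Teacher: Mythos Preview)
Your proposal is correct and follows essentially the same approach as the paper: you establish perfectness of the stable $(k+1)$-matching via the translation-invariance argument (at most $k+1$ under-matched points, hence none), and then have Bob place his frog on an unblocked partner of Alice's opening point, invoking \cref{prop:k}. The paper's proof is terser---it does not spell out Bob's stone placement or the continuation of play---but the substance is identical.
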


\begin{proof}
We claim that the stable $(k+1)$-matching is perfect with probability $1$.
Indeed, there are at most $k+1$ incompletely matched points.  But the
invariance argument of \cref{inf} shows that a translation invariant random
set of points cannot have a positive finite number of points with positive
probability.

By \cref{prop:k}, Bob wins by placing his frog on an unblocked partner of
Alice's opening frog.
\end{proof}

Previous works on stable multi-matching
\cite{perc,deijfen-holroyd-peres,deijfen-lopes} have considered questions
about connectivity of the graph (many of which remain open). We do no know
whether such questions have natural game interpretations.

\subsection{Multiple Ponds and Bitwise XOR}

Finally we address how to play several games of friendly frogs
simultaneously.  Consider $k$ sets $L_1,\ldots,L_k\subset\R^d$, each assumed
to have no infinite descending chains and all distances distinct.  (We
imagine $k$ disjoint ponds, each with its own set of lily pads). In a
position of \df{$k$-pond friendly frogs}, each set $L_i$ has two frogs on two
distinct points.  (We discuss the opening moves, in which the frogs are
placed, below). Alice and Bob take turns, and a move consists of jumping one
frog in one set $L_i$ to a different point in the same set $L_i$ according to
the usual rules: the two frogs in $L_i$ must get strictly closer, but may not
occupy the same point.  A player loses if they have no legal move in any of
the sets $L_i$.

The above game is an example of a general construction; it is known as the
\emph{disjunctive sum} of $k$ copies of friendly frogs.  A remarkable theory
of such sums of games was developed independently by Sprague \cite{sprague}
and Grundy \cite{grundy}, building on Bouton's analysis of the game of Nim
\cite{bouton} (also see \cite{winning-ways} for an exposition as well as many
far-reaching extensions). It turns out that this theory fits perfectly with
friendly frogs, enabling us to show that Bob can win even with a substantial
handicap in the opening moves.

\begin{theorem}\label{multi}
Fix $k\geq 1$ and let $L_1,\ldots,L_k$ be independent Poisson processes on
$\R^d$. Consider a game of $k$-pond friendly frogs, in which Alice first
places two frogs in each of $L_1,\ldots,L_{k-1}$ and one frog in $L_k$, then
Bob places the final frog in $L_k$, and Alice moves next.  With probability
$1$, Bob wins.
\end{theorem}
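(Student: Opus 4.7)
The plan is to apply the Sprague--Grundy theory of disjunctive sums. Because each $L_i$ almost surely has no infinite descending chains, each pond's game terminates in finitely many moves from every position, so every two-frog position has a well-defined Grundy value. The $k$-pond game is precisely the disjunctive sum of the individual pond games, so its Grundy value equals the bitwise XOR of the component Grundy values, and a position is losing for the player to move exactly when this XOR vanishes. Since Alice is to move after the opening placements, Bob wins iff he can make the combined Grundy value $0$. Writing $g_i$ for the Grundy value of the two-frog position in $L_i$ (for $i<k$), which may be chosen adversarially by Alice, and setting $T = g_1 \oplus \cdots \oplus g_{k-1}$, Bob's task reduces to placing $y_k \in L_k \setminus \{x_k\}$ such that $g_{L_k}(\{x_k,y_k\}) = T$.

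Consequently the theorem follows from the following key lemma: for a Poisson process $L$ on $\R^d$, with probability $1$, for every $x\in L$ and every $n\in\N$ there exists $y\in L\setminus\{x\}$ with $g_L(\{x,y\})=n$. Granting this, whatever value $T\in\N$ Alice's placements in $L_1,\ldots,L_{k-1}$ produce, Bob can always find a matching $y_k$ in $L_k$, completing his winning strategy.

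To prove the key lemma I would proceed by induction on $n$, using translation-invariance, ergodicity, and deletion-tolerance, in the same spirit as the earlier shy frogs argument. The base case $n=0$ is immediate: set $y=M(x)$, the stable matching partner of $x$, which exists for every $x$ since $M$ is almost surely perfect by \cref{inf}. For $n\geq 1$, the event ``some $x\in L$ has no $y$ with $g_L(\{x,y\})=n$'' is translation-invariant and so has probability $0$ or $1$ by ergodicity; to rule out probability $1$, one would assume a bad $x$ exists and exhibit a measurable finite modification of $L$ (either a deletion or an insertion, both of which yield distributions absolutely continuous to the original) that creates some $y$ with $g_L(\{x,y\})=n$, yielding a contradiction.

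The main obstacle is executing this inductive step. Grundy values are defined by the $\mex$ recursion and are sensitive to the global structure of $L$: even a small modification can alter the values at many pairs, and hitting Grundy value \emph{exactly} $n$---as opposed to something nearby---is delicate. The cleanest approach is to leverage the inductive hypothesis that values $0,1,\ldots,n-1$ are already universally achievable from every point, so that by tuning $y$ one can arrange the set of next-move Grundy values from $\{x,y\}$ to be precisely $\{0,1,\ldots,n-1\}$, forcing the $\mex$ to equal $n$.
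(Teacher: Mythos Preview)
Your reduction via Sprague--Grundy theory to the key lemma is exactly right and matches the paper's strategy: the theorem follows once you know that, almost surely, for every $x\in L$ and every $n\in\N$ there is some (in fact a unique) $y$ with Grundy value $G(x,y)=n$. The gap is that you do not actually prove this lemma. Both sketches you offer---the deletion/insertion-tolerance route and the ``tune $y$ so that the reachable values are exactly $\{0,\dots,n-1\}$'' route---are left unexecuted, and you yourself flag the central obstruction: a finite modification of $L$ perturbs Grundy values globally in ways that are hard to control, so it is unclear which zero-probability property of the Poisson process your modification is supposed to violate. The direct tuning idea also stalls, because arranging $\{G(x,z):|x-z|<|x-y|\}=\{0,\dots,n-1\}$ says nothing about the other half of the option set, namely $\{G(y,z):|y-z|<|x-y|\}$, which enters the $\mex$ on equal footing.

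The paper sidesteps all of this with a short pigeonhole argument that your proposal is missing. Let $U_n$ be the set of $x\in L$ having no $y$ with $G(x,y)=n$. By translation-invariance, $U_n$ is almost surely empty or infinite. If it is infinite, pick any $n+2$ points $W\subseteq U_n$ and fix $u\in W$. The $n+1$ values $G(u,v)$ for $v\in W\setminus\{u\}$ are pairwise distinct (immediately from the $\mex$ recursion, since the closer pair lies in the option set of the farther one) and none equals $n$ (since $u\in U_n$); hence some $G(u,v)>n$. But then, again by the $\mex$ property, some pair in $F(u,v)$ has Grundy value exactly $n$, and that pair contains $u$ or $v$---contradicting $u,v\in U_n$. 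No induction, no deletion-tolerance, and no control of global Grundy perturbations is needed.
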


In fact Bob has a unique good opening move that depends in an intricate way
on Alice's $2k-1$ initial frogs.  The key to the proof is the following
result extending stable matching to an integer-valued labeling of all pairs
of points. Write $\N:=\{0,1,2\ldots\}$. For $S\subsetneq \N$, let $\mex
S:=\min (\N\setminus S)$ be the \df{minimum excluded value}. For a set
$L\subset \R^d$ and an unordered pair of distinct points $x,y$ of $L$, let
$F(x,y)$ be the set of positions to which one can legally move in friendly
frogs, i.e.\ pairs that are strictly closer to each other than $x,y$ and
share exactly one point with $x,y$.

\begin{prop}\label{nim}
Let $L$ be a Poisson process on $\R^d$. With probability $1$, there exists a
map $G$ assigning an element of $\N$ to each unordered pair of $L$, with the
following properties.
\begin{enumerate}
\item[(i)] For every $x\in L$ and $k\in\N$ there is a
    unique $y\neq x$ such that $G(x,y)=k$.
\item[(ii)] For each pair $x,y$ we have $G(x,y)=\mex\{G(u,v): \{u,v\}\in
    F(x,y)\}$.
\end{enumerate}
\end{prop}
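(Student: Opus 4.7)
The plan is to define $G$ recursively using property (ii) and then to verify (i) almost surely. Because $L$ is locally finite, the option set $F(x,y)$ is finite for every unordered pair, so $S_{x,y}:=\{G(u,v):\{u,v\}\in F(x,y)\}$ is a finite subset of $\N$ and $\mex S_{x,y}\in\N$. Because a Poisson process almost surely has no infinite descending chain of points, the tree of iterated options rooted at any pair has finite branching and no infinite branch; by K\"onig's lemma it is finite, so the recursion $G(x,y):=\mex S_{x,y}$ is well-founded and defines a natural-number-valued function on pairs. Property (ii) then holds by construction.

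For (i), uniqueness is immediate from the mex: if $y\neq y'$ both satisfy $G(x,y)=G(x,y')=k$, then assuming $|x-y|<|x-y'|$ without loss of generality, the pair $\{x,y\}$ lies in $F(x,y')$, whence $k\in S_{x,y'}$, contradicting $G(x,y')=\mex S_{x,y'}=k$. For existence, fix $k\in\N$ and set $B_k:=\{x\in L:G(x,y)\neq k\text{ for every }y\in L\setminus\{x\}\}$; the goal is to show $B_k=\emptyset$ almost surely. The key observation is that any two distinct points $x,y\in B_k$ satisfy $G(x,y)<k$. Indeed, $G(x,y)\neq k$ by the definition of $B_k$; and if $G(x,y)>k$ then the mex property forces $k\in S_{x,y}$, i.e., some option of $\{x,y\}$ has $G$-value $k$, but such an option is either $\{x,z\}$ with $G(x,z)=k$ (contradicting $x\in B_k$) or $\{w,y\}$ with $G(w,y)=k$ (contradicting $y\in B_k$). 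Combined with uniqueness, this bounds $|B_k|\leq k+1$, since for fixed $x\in B_k$ the map $y\mapsto G(x,y)$ injects $B_k\setminus\{x\}$ into $\{0,1,\dots,k-1\}$.

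Finally, $B_k$ is a translation-equivariant functional of $L$ (since $G$ is determined by distances) and is almost surely bounded in size by $k+1$. Hence the translation-invariance argument from the proof of \cref{inf} applies: the expected number of $B_k$-points in a unit cube $Q_z$ is a constant $p$ independent of $z\in\Z^d$, and $\E|B_k|=\sum_{z\in\Z^d}p\leq k+1$ forces $p=0$; thus $|B_k\cap Q_z|=0$ almost surely for every $z$, and $B_k=\emptyset$ almost surely. Intersecting these probability-one events over countably many $k\in\N$ yields (i). I expect the main obstacle to be identifying the size bound on $B_k$; once one notices that $x,y\in B_k$ forces $G(x,y)<k$, the rest reduces to mex bookkeeping and the invariance argument already used for \cref{inf}.
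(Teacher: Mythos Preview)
Your proof is correct and follows essentially the same approach as the paper's: define $G$ by the mex recursion (the paper uses an equivalent iterative ``mutual-nearest'' algorithm and then verifies the mex property), obtain uniqueness in (i) directly from $\mex$, and for existence combine a pigeonhole bound $|B_k|\le k+1$ with translation invariance to force $B_k=\emptyset$. The paper orders the last step the other way around---invariance first to conclude $B_k$ is empty or infinite, then a pigeonhole contradiction from any $(k+2)$-subset---but the content is the same, and your direct use of (ii) in the pigeonhole step is if anything slightly cleaner than the paper's appeal to the algorithm.
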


\begin{proof}
As before, we construct the map via an iterative algorithm.  Start with
$G(x,y)$ undefined for all $x,y$.  We say that each point of $x\in L$
\df{looks at} the closest other point $y$ for which $G(x,y)$ is currently
undefined.  For every pair $x,y$ that are looking at each other, set $G(x,y)$
to equal the smallest non-negative integer that is not currently assigned to
any pair containing $x$ or $y$.  Now repeat indefinitely.

We first check that the resulting $G$ assigns an integer to every pair of
points.  Indeed, if $G(x,y)$ is undefined then $x,y$ never looked at each
other, and so one of them, say $y$, must have a closer point $z$ for which
$G(y,z)$ is undefined.  Passing to the closest such $z$ and iterating gives
an infinite descending chain, a contradiction.

We now check the claimed properties.  For (i), it is immediate that no two
pairs containing $x$ are assigned the same integer.  It remains to check that
some pair containing $x$ has the label $k$.  Let $U_k$ be the set of points
$x$ that are not contained in any pair with label $G(x,y)=k$.  By invariance,
if $U_k$ is non-empty then it is infinite.  Let $W\subseteq U_k$ be any set
of size $k+2$. By the pigeon-hole principle there exist $u,v\in W$ with
$G(u,v)>k$. But this is a contradiction: the algorithm should instead have
assigned $u,v$ a value $\leq k$.

To check (ii), note that, during the stages of the algorithm, a given point
looks at other points of $L$ in order of increasing distance (perhaps looking
at the same point for multiple consecutive stages).  Therefore, when the
algorithm assigns a value to the pair $x,y$, all pairs in $F(x,y)$ have been
assigned values, while all other pairs that share a point with $x,y$ have
not. Therefore $G(x,y)$ is assigned the $\mex$ as claimed.
\end{proof}

It is easy to see that the set of pairs $\{x,y\}$ with $G(x,y)=0$ is
precisely the stable matching.  However, the set of pairs with $G(x,y)\leq m$
does not in general coincide with the $m$-matching considered earlier.  See
\cref{multi-pic}.  It should also be noted that the analogue of property (i)
in \cref{nim} does not hold in general for finite sets $L$ -- it is possible
that for some $x$ the set $\{G(x,y):y\in L\setminus\{ x\}\}$ is not the
interval $\{0,\ldots,L-2\}$.

\begin{proof}[Proof of \cref{multi}]
\sloppypar Let $\oplus$ denote bitwise XOR of binary expansions, so if
$a=\sum_{j\in\N} \alpha_j 2^j$ and $b=\sum_{j\in\N} \beta_j 2^j$ with
$\alpha_j,\beta_j\in\{0,1\}$ then $a\oplus b:=\sum_{j\in\N} \sigma_j 2^i$
where $\sigma_j\in\{0,1\}$ satisfies $\sigma_j\equiv \alpha_j+\beta_j \pmod
2$. Consider a position of $k$-pond friendly frogs with two frogs in each
pond, at locations $x_i,y_i\in L_i$.  We claim that it is a P-position if and
only if $\bigoplus_{i=1}^k G_i(x_i,y_i) = 0$, where $G_i$ is the map given by
\cref{nim} for $L_i$. This remarkable fact follows immediately from the
general theory (see \cite{sprague,grundy,winning-ways}), given condition
\cref{nim} (ii) on $G$ and the fact that friendly frogs is an
\emph{impartial} game (i.e.\ the same moves are available to each player) and
has no infinite lines of play. Since the proof is quite simple (given the
highly non-trivial insight of what to prove), we will summarize it below.

Given this characterization of P-positions, Bob's winning move is easy to
describe.  He computes $h:=\bigoplus_{i=1}^{k-1} G_i(x_i,y_i)$, and places
the final frog on the unique point $y_k\in L_k$ for which $G_k(x_k,y_k)=h$,
which exists by \cref{nim} (i).  Since $h\oplus h=0$, this gives a
P-position.

\enlargethispage*{1cm} Finally, we explain how to prove the claim.  As usual,
this amounts to checking conditions (N) and (P).  Let $g_i=G_i(x_i,y_i)$ and
$g=\bigoplus_{i=1}^k g_i$. For (N), suppose that $g\neq 0$. Write
$g=\sum_{j\in\N} \gamma_j 2^j$, and let $k$ be maximal such that $\gamma_k=1$
(the most significant bit of $g$). Choose $i$ such that $g_i$ also has $k$th
bit equal to $1$, and note that $g_i\oplus g<g_i$.  By \cref{nim} (ii), we
can move a frog in $L_i$ to reduce $G_i(x_i,y_i)$ to $g_i\oplus g$, resulting
in a P-position.  On the other hand, for (P), if $g=0$ then by \cref{nim}
(ii), any move changes one of the $G_i(x_i,y_i)$, giving an N-position.
\end{proof}

\bibliographystyle{abbrv}
\bibliography{frog}

\begin{thebibliography}{10}

\bibitem{bhmw}
R.~Basu, A.~E. Holroyd, J.~B. Martin, and J.~W\"astlund.
\newblock Trapping games on random boards.
\newblock {\em Ann. App. Prob}.
\newblock To appear.

\bibitem{blps}
I.~Benjamini, R.~Lyons, Y.~Peres, and O.~Schramm.
\newblock Group-invariant percolation on graphs.
\newblock {\em Geom. Funct. Anal.}, 9(1):29--66, 1999.

\bibitem{winning-ways}
E.~R. Berlekamp, J.~H. Conway, and R.~K. Guy.
\newblock {\em Winning ways for your mathematical plays. {V}ol. 1}.
\newblock A K Peters, Ltd., Natick, MA, second edition, 2001.

\bibitem{bouton}
C.~L. Bouton.
\newblock Nim, a game with a complete mathematical theory.
\newblock {\em Ann. of Math. (2)}, 3(1-4):35--39, 1901/02.

\bibitem{daley-last}
D.~J. Daley and G.~Last.
\newblock Descending chains, the lilypond model, and mutual-nearest-neighbour
  matching.
\newblock {\em Adv. in Appl. Probab.}, 37(3):604--628, 2005.

\bibitem{Daley_VJ}
D.~J. Daley and D.~Vere-Jones.
\newblock {\em An introduction to the theory of point processes. {V}ol. {II}}.
\newblock Probability and its Applications. Springer, New York, second edition,
  2008.
\newblock General theory and structure.

\bibitem{perc}
M.~Deijfen, O.~H{\"a}ggstr{\"o}m, and A.~E. Holroyd.
\newblock Percolation in invariant {P}oisson graphs with i.i.d. degrees.
\newblock {\em Ark. Mat.}, 50(1):41--58, 2012.

\bibitem{deijfen-holroyd-peres}
M.~Deijfen, A.~E. Holroyd, and Y.~Peres.
\newblock Stable {P}oisson graphs in one dimension.
\newblock {\em Electron. J. Probab.}, 16(44):1238--1253, 2011.

\bibitem{deijfen-lopes}
M.~Deijfen and F.~M. Lopes.
\newblock Bipartite stable {P}oisson graphs on {$\mathbb{R}$}.
\newblock {\em Markov Process. Related Fields}, 18(4):583--594, 2012.

\bibitem{Gale}
D.~Gale and L.~S. Shapley.
\newblock College admissions and the stability of marriage.
\newblock {\em Amer. Math. Monthly}, 69(1):9--15, 1962.

\bibitem{grundy}
P.~M. Grundy.
\newblock Mathematics and games.
\newblock {\em Eureka}, 2:6--8, 1939.

\bibitem{haggstrom97}
O.~H{\"a}ggstr{\"o}m.
\newblock Infinite clusters in dependent automorphism invariant percolation on
  trees.
\newblock {\em Ann. Probab.}, 25(3):1423--1436, 1997.

\bibitem{Olle_Ronald}
O.~H{\"a}ggstr{\"o}m and R.~Meester.
\newblock Nearest neighbor and hard sphere models in continuum percolation.
\newblock {\em Random Structures Algorithms}, 9(3):295--315, 1996.

\bibitem{hmm}
A.~E. Holroyd, I.~Marcovici, and J.~B. Martin.
\newblock Percolation games, probabilistic cellular automata, and the hard-core
  model.
\newblock 2015.
\newblock arXiv:1503.05614.

\bibitem{hm-tree}
A.~E. Holroyd and J.~B. Martin.
\newblock {G}alton-{W}atson games.
\newblock In preparation.

\bibitem{AnderJamesYuval}
A.~E. Holroyd, J.~B. Martin, and Y.~Peres.
\newblock Asymmetric stable matchings in high dimensions.
\newblock In preparation.

\bibitem{Pom}
A.~E. Holroyd, R.~Pemantle, Y.~Peres, and O.~Schramm.
\newblock Poisson matching.
\newblock {\em Ann. Inst. Henri Poincar\'e Probab. Stat.}, 45(1):266--287,
  2009.

\bibitem{holroyd-soo}
A.~E. Holroyd and T.~Soo.
\newblock Insertion and deletion tolerance of point processes.
\newblock {\em Electron. J. Probab.}, 18(74):1--24, 2013.

\bibitem{krivelevich}
M.~Krivelevich.
\newblock Positional games.
\newblock In {\em Proc. International Congress Math.}, volume~4, pages
  355--379, 2014.

\bibitem{sprague}
R.~P. Sprague.
\newblock \"{U}ber mathematische {K}ampfspiele.
\newblock {\em T\^{o}hoku Math. J.}, 41:438--444, 1935/6.

\bibitem{telgarsky}
R.~Telg{\'a}rsky.
\newblock Topological games: on the 50th anniversary of the {B}anach-{M}azur
  game.
\newblock {\em Rocky Mountain J. Math.}, 17(2):227--276, 1987.

\bibitem{wastlund}
J.~W{\"a}stlund.
\newblock Replica symmetry of the minimum matching.
\newblock {\em Ann. of Math. (2)}, 175(3):1061--1091, 2012.

\end{thebibliography}
\end{document}